\documentclass[11pt]{amsart}

\usepackage{amssymb, graphicx}

\usepackage{amsfonts}
\usepackage{latexsym}
\usepackage{amscd}

\hfuzz=3pt \vfuzz=3pt

\addtolength{\textwidth}{3cm} \addtolength{\oddsidemargin}{-1.5cm}
\addtolength{\evensidemargin}{-1.5cm}
\allowdisplaybreaks[1]

\newtheorem{theorem}{Theorem}[section]

\newtheorem{lemma}[theorem]{Lemma}
\newtheorem{corollary}[theorem]{Corollary}
\theoremstyle{remark}
\newtheorem{remark}[theorem]{Remark}
            
\newtheorem{example}[theorem]{Example}   
\newtheorem{problem}[theorem]{Problem}

\newcommand{\s}{\sigma} 
\newcommand{\CC}{\mathbb{C}}
\newcommand{\R}{\mathbb{R}}
 
 \begin{document}
\title[Determining elements  through  spectral properties]{Determining elements in Banach algebras through spectral properties}

\author{Matej Bre\v sar}
\author{\v Spela \v Spenko}
\address{M. Bre\v sar,  Faculty of Mathematics and Physics,  University of Ljubljana,
 and Faculty of Natural Sciences and Mathematics, University
of Maribor, Slovenia} \email{matej.bresar@fmf.uni-lj.si}
\address{\v S. \v Spenko,  Institute of  Mathematics, Physics, and Mechanics,  Ljubljana, Slovenia} \email{spela.spenko@imfm.si}

\begin{abstract} 
Let $A$ be a Banach algebra. By $\sigma(x)$ and $r(x)$ we denote the spectrum and the spectral radius of $x\in A$, respectively. 
We consider the relationship between elements $a,b\in A$ that satisfy one of the following two conditions: (1) $\sigma(ax) = \sigma(bx)$ for all $x\in A$,  (2)  $r(ax) \le r(bx)$ 
for all $x\in A$. In particular we show that (1) implies $a=b$ if $A$ is a $C^*$-algebra, and (2) implies $a\in \mathbb C b$ if $A$ is a prime $C^*$-algebra. As an application of 
the results concerning the conditions 
(1) and (2) we obtain some spectral characterizations of multiplicative maps.
\end{abstract}

\keywords{Banach algebra, $C^*$-algebra, spectrum, spectral radius.}
\thanks{2010 {\em Math. Subj. Class.} 46H05, 46J05, 46L05, 47A10. }
\thanks{Supported by ARRS Grant P1-0288.}
\maketitle
\section{Introduction}

By a Banach algebra we shall mean a complex Banach algebra. For simplicity of the exposition we assume that all our algebras have  identity elements. 
The spectrum of an element $a$  of a Banach algebra $A$ will be denoted by  $\sigma(a)$, or, occasionally, by  $\sigma_A(a)$. By $r(a)$ we denote the spectral radius of $a$. We write $Z(A)$ for the center of $A$.

Recall that a Banach algebra $A$ is {\em semisimple} if and only if the only element $a\in A$ with the property 
$\sigma(ax)=\{0\}$ for all $x\in A$ is the zero element. That is, $\sigma(ax)=\sigma(0x)$ for all $x\in A$ implies $a=0$.
We propose to study the following problem.

\begin{problem}\label{P}
Let  $A$ be a  semisimple Banach algebra.
Suppose that $a,b\in A$ satisfy 
\begin{equation}\label{eab}
\sigma(ax) = \sigma(bx)\quad\mbox{ for all $x\in A$}.
\end{equation}
Does this imply $a=b$? 
\end{problem}

We do not know the answer in general. In various special cases, however, we are able to show that it is affirmative. Firstly, we will establish  this under the assumption that $a$ can be written as the product of an idempotent and an invertible element. The proof is based on a spectral characterization of 
central idempotents, which may be  of independent interest. 
  Secondly, we will handle the case where $A$ is a commutative Banach algebra, and thirdly, in the main result of Section \ref{S2}, the case where $A$ is a $C^*$-algebra. 
%We are able to solve Problem \ref{P} in four special cases:  (a) $A$ is commutative, (b) $A$ is a $C^*$-algebra,  (c) $a$ is invertible, (d) $a$ is an idempotent.

In Section \ref{S3} we will treat a considerably more general condition that concerns the spectral radius.

\begin{problem}\label{P2}
Let  $A$ be a  semisimple Banach algebra.
Suppose that $a,b\in A$ satisfy 
\begin{equation}\label{sr}
r(ax) \le  r(bx)\quad\mbox{ for all $x\in A$}.
\end{equation}
What is the relation between  $a$ and $b$? 
\end{problem}
This problem is admittedly stated vaguely. However,  we will  see that the answer to our question may depend on the algebra or on the elements in question.
%However, it is difficult to expect that there is  a simple  condition connecting $a$ and $b$ that is characteristic for \eqref{sr} in general semisimple Banach algebras. 
%We will first record an easily proven result concerned with the case where $b$ is invertible. 
A special situation where $b=1$ has been examined earlier by Ptak \cite{Ptak} (and, independently, also in the recent paper \cite{BBR}). The conclusion in this case is that $a\in Z(A)$. 
Our main result concerning Problem \ref{P2} says that  if $A$ is a prime $C^*$-algebra, then the elements $a$ and $b$ satisfying \eqref{sr} are necessarily linearly dependent.

We believe that Problems \ref{P} and \ref{P2} are interesting and challenging in their own right. Our initial motivation for their consideration, however, 
were certain questions centered around Kaplansky's problem on spectrum preserving maps \cite{K}. They are the topic of Section 4. Using the results of Section 2 we will first consider
the problem whether a map  $\varphi$ between Banach algebras $A_0$ and $A$ that satisfies 
\begin{equation}\label{es}
\sigma\bigl(\varphi(x)\varphi(y)\varphi(z)\bigr) = \sigma(xyz)\quad\mbox{for all $x,y,z\in A_0$}
\end{equation}
 is  multiplicative (up to a product with a central element). 
Here we were primarily motivated by Molnar's  paper \cite{Mol} in which he studied a more entangled condition $\sigma\bigl(\varphi(x)\varphi(y)\bigr) = \sigma(xy)$, but only on some special algebras.   
Finally we  will apply  the main result of Section 3 to a  map $\varphi:A_0\to A$ satisfying
  \begin{equation}\label{er}
r\bigl(\varphi(x)\varphi(y)\varphi(z)\bigr) = r(xyz) \quad\mbox{for all $x,y,z\in A_0$.}
\end{equation} 
A somewhat more detailed explanation about the background and motivation for considering \eqref{es} and \eqref{er}
 will be given at the beginning of Section 4.
 
\section{The condition $\sigma(ax) = \sigma(bx)$} \label{S2}

This section is devoted to Problem \ref{P}. To get some feeling for the subject we start by mentioning that in $A=B(X)$, the algebra of all bounded linear operators on a Banach space $X$, \eqref{eab} indeed implies   $a=b$. One just has to take an arbitrary rank one operator for $x$ in \eqref{eab}, and the desired conclusion  easily follows (cf. \cite[Lemma 1]{TL}). In more general Banach algebras, where we do not have appropriate analogues of finite rank operators, the spectrum is not so easily tractable and more sophisticated methods are necessary.

\subsection{Spectral characterization of central idempotents} We begin by recording an elementary lemma which will be needed in the proofs of Theorems \ref{T1} and \ref{Tr}. 
 \begin{lemma} \label{Lld}
 Let $X$ be a complex vector space and let $S,T:X\to X$ be linear operators such that $S\xi\in \mathbb C T\xi$ for every $\xi\in X$. Then $S\in \mathbb C T$.
 \end{lemma}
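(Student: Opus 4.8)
The plan is to manufacture a single scalar $\lambda$ that works simultaneously for every $\xi$. For each $\xi\in X$ the hypothesis provides a scalar $\lambda_\xi$ with $S\xi=\lambda_\xi T\xi$, and the entire content of the lemma is that $\lambda_\xi$ may be taken independent of $\xi$. The first thing I would record is the degenerate behaviour on the kernel: whenever $T\xi=0$ we have $S\xi\in\mathbb{C}\cdot 0=\{0\}$, so $S$ vanishes on $\ker T$. This lets me restrict attention to the set $U=\{\xi:T\xi\neq 0\}$, on which the scalar $\lambda_\xi$ is uniquely determined. If $T=0$ then $S=0$ and the assertion holds trivially with $\lambda=0$, so I may assume $T\neq 0$, fix some $\xi_0\in U$, and set $\lambda:=\lambda_{\xi_0}$. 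The task reduces to proving $\lambda_\eta=\lambda$ for every $\eta\in U$.

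I would then split into two cases according to the linear (in)dependence of $T\xi_0$ and $T\eta$. If these are linearly independent, then $T(\xi_0+\eta)\neq 0$, so applying the hypothesis to $\xi_0+\eta$ gives $S(\xi_0+\eta)=\lambda_{\xi_0+\eta}\bigl(T\xi_0+T\eta\bigr)$. Comparing this with $S(\xi_0+\eta)=S\xi_0+S\eta=\lambda_{\xi_0}T\xi_0+\lambda_\eta T\eta$ and invoking the independence of $T\xi_0,T\eta$ forces $\lambda_{\xi_0}=\lambda_{\xi_0+\eta}=\lambda_\eta$, as desired.

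The delicate case, which I expect to be the main obstacle, is the dependent one: here $T\eta=cT\xi_0$ for some nonzero scalar $c$, and the superposition trick above collapses because $T\xi_0$ and $T\eta$ no longer separate the two scalars. The remedy is to return to the kernel observation from the first step. Since $T(\eta-c\xi_0)=T\eta-cT\xi_0=0$, linearity yields $S(\eta-c\xi_0)=0$, that is, $S\eta=cS\xi_0$. Rewriting both sides through the defining relations gives $\lambda_\eta\,(cT\xi_0)=c\,\lambda_{\xi_0}T\xi_0$, and cancelling the nonzero vector $cT\xi_0$ produces $\lambda_\eta=\lambda_{\xi_0}=\lambda$.

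Combining the two cases, $S\xi=\lambda T\xi$ holds for all $\xi\in U$, while for $\xi\notin U$ both sides vanish; hence $S=\lambda T\in\mathbb{C}T$, completing the proof. The only real subtlety lies in the dependent case, where one cannot detect the scalar from $T\eta$ alone and must instead extract the relation $S\eta=cS\xi_0$ from the fact that $S$ kills $\ker T$.
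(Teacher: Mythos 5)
Your proof is correct. The paper itself never writes out an argument for this lemma: its ``proof'' is a two-sentence remark that the statement ``can be proved directly by elementary methods,'' followed by an appeal to the more general result \cite[Theorem 2.3]{BS} on locally linearly dependent operators, which reduces the problem to the case where $S$ and $T$ have rank one. What you have supplied is precisely the elementary argument the paper alludes to but omits: observe that $S$ annihilates $\ker T$, fix a base point $\xi_0$ with $T\xi_0\neq 0$, and propagate the scalar $\lambda_{\xi_0}$ to every $\eta$ with $T\eta\neq 0$, using superposition when $T\xi_0$ and $T\eta$ are independent and the kernel observation (via $T(\eta-c\xi_0)=0$) when they are dependent. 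Your case analysis is complete and each step is justified; in particular you correctly flag the dependent case as the one where the naive superposition argument collapses, and you resolve it properly. What your route buys is a short, self-contained proof requiring nothing beyond linear algebra; what the paper's route buys is brevity and a connection to a genuinely more general theory -- the cited theorem handles local linear dependence of several operators at once, a situation where the one-scalar bookkeeping you use no longer suffices.
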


\begin{proof}
This lemma can be proved directly by elementary methods. On the other hand, one can apply a more general result
 \cite[Theorem 2.3]{BS} which reduces the problem to an easily handled situation where both $S$ and $T$ have rank one.
\end{proof}

In our first theorem we consider a variation of the condition \eqref{eab}.

\begin{theorem}\label{T1}
Let  $A$ be a   semisimple Banach algebra. The following conditions are equivalent for $e\in A$:
\begin{enumerate}
\item[(i)] $ \sigma(ex)\subseteq \sigma(x)\cup\{0\}$ for all $x\in A$.
\item[(ii)] $e$ is a central idempotent. 
\end{enumerate}
\end{theorem}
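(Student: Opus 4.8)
The plan is to prove the two implications separately, with (ii)$\Rightarrow$(i) routine and (i)$\Rightarrow$(ii) carrying the real content. For (ii)$\Rightarrow$(i) (the cases $e=0,1$ being trivial) I would exploit the fact that a central idempotent splits $A$ as a product of Banach algebras: $eA$ and $(1-e)A$ are closed two-sided ideals, unital with identities $e$ and $1-e$, and $a\mapsto(ea,(1-e)a)$ is an isomorphism $A\cong eA\times(1-e)A$. Under it $x$ corresponds to $(ex,(1-e)x)$ and $ex$ to $(ex,0)$. Since the spectrum in a finite product is the union of the component spectra, $\sigma(ex)=\sigma_{eA}(ex)\cup\{0\}$ while $\sigma(x)=\sigma_{eA}(ex)\cup\sigma_{(1-e)A}((1-e)x)\supseteq\sigma_{eA}(ex)$, giving $\sigma(ex)\subseteq\sigma(x)\cup\{0\}$ at once.

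The substance is (i)$\Rightarrow$(ii). First I would read off two easy consequences of (i). Taking $x=1$ gives $\sigma(e)\subseteq\sigma(1)\cup\{0\}=\{0,1\}$, so by the polynomial spectral mapping theorem $\sigma(e^2-e)=\{\lambda^2-\lambda:\lambda\in\sigma(e)\}=\{0\}$, i.e.\ $e^2-e$ is quasinilpotent. Passing to spectral radii in (i) gives $r(ex)\le r(x)$ for all $x$, since adjoining $0$ cannot enlarge the largest modulus. My strategy is then: prove $e$ central, after which idempotency is immediate. Indeed, once $e$ is central, $e^2-e$ is a central quasinilpotent, so $((e^2-e)x)^n=(e^2-e)^n x^n$ yields $r((e^2-e)x)\le r(e^2-e)\,r(x)=0$ for every $x$; hence $e^2-e$ lies in the Jacobson radical, which is $\{0\}$ by semisimplicity, and $e^2=e$.

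The centrality step is where Lemma \ref{Lld} and semisimplicity enter. It suffices to show that $\pi(e)$ is a scalar multiple of the identity for every algebraically irreducible representation $\pi\colon A\to\mathrm{End}(X)$: then $\pi(ea-ae)=0$ for all $a$ and all such $\pi$, so $ea-ae$ lies in the radical $\{0\}$ and $e\in Z(A)$. To see that $\pi(e)$ is scalar I would invoke Lemma \ref{Lld} with $S=\pi(e)$ and $T=\mathrm{id}_X$, so that it is enough to check $\pi(e)\xi\in\mathbb{C}\xi$ for every $\xi\in X$. Arguing by contradiction, if $\xi$ and $\pi(e)\xi$ are linearly independent I would use the Jacobson density theorem to prescribe the action of some $\pi(x)$ on their span so that $\pi(ex)=\pi(e)\pi(x)$ acquires a nonzero eigenvalue $\lambda$ absent from $\sigma(x)$, contradicting (i); this is exactly the mechanism by which the non-central idempotent $\mathrm{diag}(1,0)$ in $M_2(\mathbb{C})$ fails (i) when tested against a matrix with non-real eigenvalues. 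Equivalently, centrality here is precisely the conclusion of Pt\'ak's spectral-radius commutativity theorem applied to $r(ex)\le r(x)$.

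The main obstacle is this centrality step. Density lets me control $\pi(x)$ on the finite-dimensional span of $\xi$ and $\pi(e)\xi$, but condition (i) refers to the \emph{global} spectrum $\sigma_A(x)$, which may be strictly larger than the spectrum of the operator $\pi(x)$; the eigenvalue $\lambda$ detected in the representation automatically lies in $\sigma_A(ex)$, yet I must also guarantee $\lambda\notin\sigma_A(x)$. Bridging this gap — producing $x$ whose global spectrum genuinely omits $\lambda$ — is the technical heart, and I would handle it either by choosing $x$ inside a subalgebra on which the two spectra coincide, or by deferring the centrality to the commutativity theorem cited above and using the remaining, easier steps to complete the equivalence.
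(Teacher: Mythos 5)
Your proposal is correct, but only through the second of the two escape routes you name at the end: deferring centrality to Pt\'ak's theorem. From (i) one indeed gets $r(ex)\le r(x)$ for all $x\in A$, and \cite[Proposition 2.1]{Ptak} (which the paper itself invokes later, in the proof of Theorem \ref{Tpt}) then gives $e\in Z(A)$; your finish is sound: $\sigma(e)\subseteq\{0,1\}$ makes $e^2-e$ a \emph{central} quasinilpotent, so $r\bigl((e^2-e)x\bigr)\le r(e^2-e)\,r(x)=0$ for every $x$, and semisimplicity kills $e^2-e$. Your product-decomposition proof of (ii)$\Rightarrow$(i) is also fine (the paper instead exhibits the inverse of $ex-\lambda$ explicitly). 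So the overall organization is genuinely different from the paper's: the paper never separates centrality from idempotency and never passes through spectral radii; it runs a single representation-theoretic argument which delivers $\pi(e)=\lambda\pi(1)$ with $\lambda\in\{0,1\}$ for every irreducible $\pi$, whence $\pi(e^2-e)=0$ and $\pi(ex-xe)=0$ simultaneously. What your route buys is brevity; what the paper's buys is self-containedness, since Pt\'ak's commutativity theorem is itself a nontrivial result.

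As for the obstacle that stalls your primary (density-theorem) route: the paper bridges exactly the local-versus-global gap you identified, and the device is worth knowing. By Sinclair's extension of the Jacobson density theorem \cite[Corollary 4.2.6]{Ab}, the element acting as prescribed on the span of $\xi$ and $\eta=\pi(e)\xi$ can be taken \emph{invertible}: there is an invertible $t\in A$ with $\pi(t)\xi=-\eta$ and $\pi(t)\eta=\xi$. One then tests (i) not on an arbitrary element whose global spectrum is unknown, but on $x=t^{-1}et$, a conjugate of $e$ itself, so that $\sigma(x)=\sigma(e)\subseteq\sigma(1)\cup\{0\}=\{0,1\}$ is known exactly (the inclusion again from (i) with $x=1$). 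Since $\pi\bigl(et^{-1}et\bigr)\eta=-\eta$, this yields
\[
-1\in\sigma\bigl(et^{-1}et\bigr)\subseteq\sigma\bigl(t^{-1}et\bigr)\cup\{0\}=\sigma(e)\cup\{0\}\subseteq\{0,1\},
\]
a contradiction. In other words, conjugation-invariance of the spectrum, made available by the invertibility of the density element, is what produces a test element whose global spectrum omits the detected eigenvalue; no comparison between $\sigma_A(x)$ and the operator spectrum of $\pi(x)$ is ever needed. If you want a proof independent of Pt\'ak, this is the missing idea.
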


\begin{proof} (i)$\Longrightarrow$(ii).
Let $\pi$ be an irreducible representation of $A$ on a Banach space $X$. Suppose there exists $\xi \in X$ such that $\xi$ and $\eta =\pi(e)\xi$ are linearly independent.  By Sinclair's extension of the Jacobson density theorem
\cite[Corollary 4.2.6]{Ab} there exists an invertible $t\in A$ such that $\pi(t)\xi = -\eta$ and $\pi(t)\eta = \xi$. Accordingly, 
$$
\pi\bigl( et^{-1}et\bigr)\eta = 
\pi(e)\pi(t)^{-1}\pi(e)\pi(t)\eta = - \eta.
$$
Hence
$$
-1\in \sigma\bigl(\pi\bigl( et^{-1}et\bigr)\Bigr)\subseteq \sigma\bigl( et^{-1}et\bigr)\subseteq  \sigma \bigl(t^{-1}et\bigr)\cup\{0\} = \sigma(e)\cup\{0\} \subseteq \sigma(1)\cup\{0\} = \{0,1\},
$$
a contradiction. Therefore $\pi(e)\xi\in \mathbb C\xi$ for every $\xi\in X$. Lemma \ref{Lld} implies that there exists $\lambda\in\mathbb C$ such that
$\pi(e)=\lambda\pi(1)$. Thus $\lambda\in \sigma\bigl(\pi(e)\bigr) \subseteq \sigma(e)\subseteq \{0,1\}$, so that $\lambda =0$ or $\lambda =1$. Therefore $\pi(e^2) =\pi(e)$
and also $\pi(ex- xe) =0$ for every $x\in A$. The semisimplicity of $A$ implies that $e$ is an idempotent lying in  the center of $A$. 

(ii)$\Longrightarrow$(i). Take  $\lambda\notin \sigma(x)$ such that $\lambda\ne 0$. Then $ex - \lambda$ has an inverse, namely 
$$(ex - \lambda)^{-1}=e(x-\lambda)^{-1} - \lambda^{-1}(1-e).$$
Therefore  $\lambda\notin \sigma(ex)$.
%We begin by noticing that $\sigma(b)\subseteq \{0,1\}$. 
\end{proof}

\begin{corollary}\label{C1}
Let  $A$ be a   semisimple Banach algebra. If $e\in A$ is such that 
 $$\sigma(ex)\cup\{0\} =  \sigma(x)\cup\{0\}\quad\mbox{ for all $x\in A$,}$$
  then $e=1$.
\end{corollary}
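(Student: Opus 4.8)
The plan is to split the hypothesis into its two separate inclusions and dispatch each with a different tool. The inclusion $\sigma(ex)\subseteq\sigma(x)\cup\{0\}$ for all $x\in A$ is precisely condition (i) of Theorem \ref{T1}, so that theorem applies verbatim and tells us immediately that $e$ is a central idempotent. This disposes of half of the work for free and reduces the corollary to a statement about central idempotents: we must show that the only central idempotent $e$ for which the reverse inclusion $\sigma(x)\subseteq\sigma(ex)\cup\{0\}$ holds for all $x$ is $e=1$.

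For the reverse inclusion my idea is to feed in a test element that annihilates $e$. Since $e$ is a central idempotent, so is $1-e$, and $e(1-e)=e-e^2=0$. Taking $x=1-e$ therefore gives $\sigma(ex)=\sigma(0)=\{0\}$, so the reverse inclusion forces $\sigma(1-e)\subseteq\sigma(0)\cup\{0\}=\{0\}$, that is, $\sigma(1-e)=\{0\}$.

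To finish I would invoke the elementary observation that a nonzero idempotent $f$ always has $1\in\sigma(f)$: from $f(f-1)=0$ together with $f\neq 0$ one sees that $f-1$ cannot be invertible, for otherwise $f=0$. Applying this to $f=1-e$: if $e\neq 1$, then $1-e$ is a nonzero idempotent, whence $1\in\sigma(1-e)$, contradicting $\sigma(1-e)=\{0\}$. Therefore $e=1$.

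I do not expect a genuine obstacle here, since Theorem \ref{T1} carries essentially all the analytic weight; the only points requiring a moment's care are the choice of the test element $x=1-e$ (so that $ex$ vanishes and the spectrum collapses to $\{0\}$) and the spectral fact that a nonzero idempotent contains $1$ in its spectrum.
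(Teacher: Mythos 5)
Your proof is correct and follows exactly the paper's route: Theorem \ref{T1} gives that $e$ is a (central) idempotent, and the substitution $x=1-e$ forces $\sigma(1-e)=\{0\}$, hence $e=1$. The paper states this in one line; you have merely supplied the details it leaves implicit (that $e(1-e)=0$ and that a nonzero idempotent has $1$ in its spectrum), and those details are right.
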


\begin{proof}
Theorem \ref{T1} says that $e$ is an idempotent. By taking $1-e$ for $x$ we obtain $e=1$.  
\end{proof}

\subsection{The unit-regular element case}
An element of a ring $R$ that can be written as the product of an idempotent and an invertible element is called a {\em unit-regular element}.  We say that $R$ is a {\em unit-regular ring}
if all its elements are unit-regular. Unit-regularity is an old and thoroughly studied concept in Ring Theory.

\begin{theorem}\label{T}
Let  $A$ be a   semisimple Banach algebra and  let $a,b\in A$ be such that $\sigma(ax) = \sigma(bx)$ for all $x\in A$. If $a$ is a unit-regular element,  then $a=b$.
\end{theorem}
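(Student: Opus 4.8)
The plan is to use the unit-regular factorization to reduce the hypothesis to a situation governed by Corollary~\ref{C1}. Write $a=eu$ with $e$ an idempotent and $u$ invertible, and set $c=bu^{-1}$. Substituting $u^{-1}y$ for $x$ in the hypothesis $\sigma(ax)=\sigma(bx)$ and using $au^{-1}=e$, we obtain the equivalent condition
\[
\sigma(ey)=\sigma(cy)\qquad\text{for all }y\in A.
\]
Since $b=cu$, the theorem will follow once we prove that $c=e$.

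First I would pin down $c$ inside the corner $eAe$. Replacing $y$ by $(1-e)y$ in the displayed identity and using $e(1-e)=0$ gives $\sigma\bigl(c(1-e)y\bigr)=\{0\}$ for all $y$, so the semisimplicity of $A$ forces $c(1-e)=0$, that is, $c=ce$. For the other side I would pass to the opposite multiplication through the standard relation $\sigma(pq)\cup\{0\}=\sigma(qp)\cup\{0\}$: from $\sigma(ey)=\sigma(cy)$ one gets $\sigma(ye)\cup\{0\}=\sigma(yc)\cup\{0\}$ for all $y$, and replacing $y$ by $y(1-e)$ together with semisimplicity yields $(1-e)c=0$, i.e.\ $c=ec$. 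Hence $c=ece\in eAe$.

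It then remains to compare $c$ with $e$ in the corner algebra $B=eAe$. This is a unital Banach algebra with identity $e$ (the map $x\mapsto exe$ is a bounded idempotent projection, so $B$ is closed), and it is semisimple because $\mathrm{rad}(eAe)=e\,\mathrm{rad}(A)\,e=0$. For $y\in B$ we have $ey=y$, so the identity $\sigma(ey)=\sigma(cy)$ reads $\sigma_A(y)=\sigma_A(cy)$; invoking the standard relation $\sigma_B(z)\cup\{0\}=\sigma_A(z)\cup\{0\}$ valid for $z\in B$ (applied to $z=y$ and to $z=cy\in B$) this becomes $\sigma_B(cy)\cup\{0\}=\sigma_B(y)\cup\{0\}$ for all $y\in B$. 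Corollary~\ref{C1}, applied in $B$, now gives $c=e$, whence $b=cu=eu=a$.

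The two annihilation steps are routine consequences of semisimplicity; the point that requires care is the transition to the corner algebra $eAe$, namely verifying that it is semisimple and that its spectra agree with those of $A$ away from the point $0$. Once these standard corner facts are in place, Corollary~\ref{C1} does the rest, so the main conceptual content of the argument is the reduction of the unit-regular case to the already settled situation of Corollary~\ref{C1}.
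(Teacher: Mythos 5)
Your proof is correct and follows essentially the same route as the paper's: reduce to the idempotent case via the invertible factor, use semisimplicity twice (with the relation $\sigma(pq)\cup\{0\}=\sigma(qp)\cup\{0\}$) to force $c\in eAe$, and then apply Corollary~\ref{C1} in the corner algebra $eAe$ using $\sigma_{eAe}(z)\cup\{0\}=\sigma_A(z)\cup\{0\}$. The only difference is cosmetic: you spell out the corner-algebra facts (closedness of $eAe$ and $\mathrm{rad}(eAe)=e\,\mathrm{rad}(A)\,e$) that the paper asserts or cites to Rickart.
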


% \begin{corollary}\label{Te}
% Let  $A$ be a   semisimple Banach algebra and  let $e,b\in A$ satisfy $\sigma(ex) = \sigma(bx)$ for all $x\in A$. If $e$ is an idempotent,  then $e=b$. \end{corollary}
 
 \begin{proof} 
 We have $a = et$ with $e$ an idempotent and $t$ invertible. Replacing $x$ by $t^{-1}x$ in $\sigma(ax) = \sigma(bx)$ we get $\sigma(ex) = \sigma(b'x)$ for all $x\in A$, where $b'=bt^{-1}$. Hence we see that with no loss of generality we may assume that $a=e$ is an idempotent. Further, in view of Corollary \ref{C1} we may also assume that $e\ne 1$.
 
 Replacing $x$ by $(1-e)x$ in  $\sigma(ex) = \sigma(bx)$ we get $ \sigma\bigl(b(1-e)x\bigr) =0$,
 and therefore $b(1-e)=0$ by the semisimplicity of $A$.
 Similarly, replacing $x$ by $x(1-e)$ we get $\sigma\bigl(ex(1-e)\bigr) = \sigma\bigl(bx(1-e)\bigr)$, hence $\sigma\bigl((1-e)ex\bigr)\cup \{0\} 
 = \sigma\bigl((1-e)bx\bigr)\cup \{0\}$, which gives
 $\sigma\bigl((1-e)bx\bigr) = \{0\}$. Consequently, $(1-e)b =0$. Together with $b(1-e)=0$ this yields $b \in eAe$.
 
 It is easy to see that $eAe$ is a Banach subalgebra of $A$ with $e$ as an identity element, and that $\sigma_A(y)= \sigma_{eAe}(y)\cup\{0\}$ for every 
 $y\in eAe$ (see, e.g., \cite[Theorem 1.6.15]{Ric}).  The condition $\sigma(exe) = \sigma(b\cdot exe)$ for every $x\in A$ can be therefore rewritten as
 $\sigma_{eAe}(y)\cup\{0\} = \sigma_{eAe}(by)\cup\{0\}$  for every $y\in eAe$. Since the algebra $eAe$ is also semisimple, we infer from Corollary 
 \ref{C1} that $b=e$.
 \end{proof}

\subsection{The commutative case} Relying on known results,   Problem \ref{P} can be easily settled in the commutative case.

\begin{theorem}
If $A$ is a  commutative semisimple Banach algebra and $a,b\in A$ satisfy $\sigma(ax) = \sigma(bx)$
for all $x\in A$, then $a=b$.
\end{theorem}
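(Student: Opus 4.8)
The plan is to pass to the Gelfand representation. Since $A$ is commutative and semisimple, its Gelfand transform $x\mapsto \hat x$ is an injective homomorphism of $A$ into $C(\Phi)$, where $\Phi=\Phi_A$ is the (compact, since $A$ is unital) character space, and for every $x\in A$ one has $\sigma(x)=\hat x(\Phi)$. Thus $a=b$ is equivalent to $\hat a=\hat b$, and the hypothesis $\sigma(ax)=\sigma(bx)$ translates into the statement that the continuous functions $\hat a\,\hat x$ and $\hat b\,\hat x$ have the same range on $\Phi$ for every $x\in A$. The whole problem is therefore to deduce $\hat a=\hat b$ from this family of range equalities.

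Taking suprema of moduli gives the weaker condition $r(ax)=r(bx)$, i.e.\ $\|\hat a\,\hat x\|_\infty=\|\hat b\,\hat x\|_\infty$ for all $x$; I would stress at the outset that this weaker condition is genuinely insufficient (in the disc algebra the functions $z$ and $z^2$ satisfy it but are distinct), so the argument must use the full equality of the ranges, not merely of their radii. My plan is to prove $\hat a=\hat b$ first on the Shilov boundary $\partial$ of $\hat A$. Fix a peak point $\phi_0$ of $\partial$ (or, when $\Phi$ is not metrizable, a strong boundary point), choose $x$ with $\hat x(\phi_0)=1=\|\hat x\|_\infty$ and $|\hat x(\phi)|<1$ for $\phi\neq\phi_0$, and test the hypothesis on the resolvents $u_\lambda=(x-\lambda)^{-1}\in A$ for real $\lambda>1$ (these are invertible because $\hat x(\Phi)\subseteq\overline{\mathbb D}$ avoids $\lambda$). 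Then
\[
\sigma(au_\lambda)=\Bigl\{\tfrac{\hat a(\phi)}{\hat x(\phi)-\lambda}:\phi\in\Phi\Bigr\}=\Bigl\{\tfrac{\hat b(\phi)}{\hat x(\phi)-\lambda}:\phi\in\Phi\Bigr\}=\sigma(bu_\lambda).
\]
As $\lambda\downarrow1$ the point $\phi_0$ drives the denominator to $0$, while away from $\phi_0$ the denominators stay bounded away from $0$; hence the value of largest modulus in $\sigma(au_\lambda)$ is asymptotically $\hat a(\phi_0)/(1-\lambda)$ and that in $\sigma(bu_\lambda)$ is $\hat b(\phi_0)/(1-\lambda)$. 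Matching these extreme values — and here it is essential that the \emph{sets}, not only their radii, coincide, since this pins down the argument as well as the modulus — yields $\hat a(\phi_0)=\hat b(\phi_0)$ (the case $\hat a(\phi_0)=0$ being forced to give $\hat b(\phi_0)=0$ by the equality of radii).

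Peak points (strong boundary points in general) are dense in $\partial$, so by continuity $\hat a=\hat b$ on all of $\partial$. Since the sup norm of a Gelfand transform is attained on the Shilov boundary, the vanishing of $\widehat{a-b}$ on $\partial$ forces $\|\widehat{a-b}\|_\infty=0$, that is $\hat a=\hat b$ on $\Phi$, and semisimplicity then gives $a=b$. The main obstacle I anticipate is precisely the localization step: in a non-regular algebra (the disc algebra being the prototype) one cannot isolate an interior character by a ``bump'', so one is forced onto the boundary and must control, uniformly in $\lambda$, the contribution of the characters near $\phi_0$; carrying out this resolvent estimate rigorously at strong boundary points, rather than at genuine peak points, is the technical heart of the argument.
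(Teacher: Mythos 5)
Your outline (Gelfand transform, then equality at peak points via resolvents, then Shilov boundary) is the right kind of argument, and your computation at a genuine peak point is essentially correct: the values of maximal modulus in $\sigma(au_\lambda)$, rescaled by $(1-\lambda)$, converge to $\hat a(\phi_0)$, so equality of the spectra \emph{as sets} (with the case $\hat a(\phi_0)=0$ disposed of by radii) forces $\hat a(\phi_0)=\hat b(\phi_0)$; the $z$ versus $z^2$ remark correctly shows radii alone cannot suffice. In effect you are reproving the lemma of Luttman and Tonev that the paper cites as a black box. But there is a gap you never address, and it sits before your argument even starts: every piece of peak-point/strong-boundary-point machinery you invoke (Bishop's theorem, density of such points in the Shilov boundary) is a theorem about \emph{uniform algebras}, i.e.\ about the closure $\overline{\hat{A}}$ of $\hat{A}$ in $C(\Phi)$, whereas your argument needs the peaking function to be of the form $\hat x$ with $x\in A$ --- otherwise $u_\lambda=(x-\lambda)^{-1}$ is not an element of $A$ and the hypothesis $\sigma(au_\lambda)=\sigma(bu_\lambda)$ cannot be applied to it. Since $\hat{A}$ is in general a dense but non-closed subalgebra of $\overline{\hat{A}}$, there is no reason for it to contain any peak function, and approximating a peak function $f\in\overline{\hat{A}}$ by some $\hat x$ does not rescue the argument: the approximant can take values of modulus $\geq 1$, with uncontrolled phase, at points other than $\phi_0$, and then the blow-up of $(\hat x-\lambda)^{-1}$ as $\lambda$ decreases is no longer driven by $\phi_0$, which is exactly the estimate your proof rests on. The missing idea is precisely the paper's first step: extend the hypothesis from $A$ to the uniform closure \emph{before} doing anything else. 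The paper does this via Aupetit's continuity of the spectrum in commutative Banach algebras; in your formulation it is even elementary, because $\sigma(ax)$ is just the range of $\hat a\hat x$ on $\Phi$, and ranges of continuous functions converge in the Hausdorff metric under uniform convergence, so $\hat a g(\Phi)=\hat b g(\Phi)$ for every $g\in\overline{\hat{A}}$. With that in hand, your whole argument can legitimately be run inside the uniform algebra $\overline{\hat{A}}$, whose elements $(g-\lambda)^{-1}$ are then admissible test objects.

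The second gap is the one you flag yourself: the non-metrizable case is not a routine variant of the peak-point estimate. A strong boundary point only supplies, for each neighborhood $U$ of $\phi_0$, a function $f$ with $f(\phi_0)=\|f\|_\infty=1$ and $|f|<1$ off $U$; the level set $K=\{f=1\}$ lies in $U$ but need not equal $\{\phi_0\}$. The rescaled spectrum $(1-\lambda)\sigma\bigl(a(f-\lambda)^{-1}\bigr)$ then contains $\hat a(K)$ exactly, and the asymptotic analysis identifies the peripheral values of $\hat a$ over $K$ with those of $\hat b$ over $K$ --- not values at $\phi_0$. One must then shrink $U$, use continuity of $\hat a,\hat b$, and exploit $\|f\|_\infty\le 1$ to kill a possible limiting phase factor of the form $1/(1+v)$ with $\mathrm{Re}\, v\ge 0$ in the peripheral values. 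This can be pushed through, but as written your proposal asserts it rather than proves it. So: right scheme, genuinely incomplete --- and the step you omit entirely (passage to the uniform closure) is exactly what the paper's short proof consists of before it hands the remainder to the cited lemma.
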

\begin{proof}
By the Gelfand representation theorem we may consider $A$ as a subalgebra of $C(K)$, the algebra of all continuous functions on a compact  Hausdorff space $K$, which 
separates points and contains constants. Thus its closure $\overline{A}$ with respect to the uniform norm is a uniform algebra.
Since the spectrum in commutative Banach algebras is continuous \cite[Theorem 3.4.1]{Ab},  $\sigma(ax)=\sigma(bx)$ holds for all $x\in \overline{A}$. Therefore $a=b$ follows from   \cite[Lemma 3]{LT}. 
\end{proof}

\subsection{The $C^*$-algebra case} 
We consider the next theorem as the main result  of this section.
\begin{theorem}\label{Sp}
If $A$ is a  $C^*$-algebra and $a,b\in A$ satisfy $\sigma(ax) = \sigma(bx)$ for all $x\in A$, then $a=b$.
\end{theorem}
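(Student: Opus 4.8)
The plan is to reduce to irreducible representations and then exploit the transitivity that is available in a $C^*$-algebra but not in a general Banach algebra. Since a $C^*$-algebra has a separating family of irreducible representations, it suffices to prove $\pi(a)=\pi(b)$ for every irreducible $\pi\colon A\to B(H)$. The cleanest situation is when $\pi$ is faithful, i.e.\ $A$ is primitive: then $\pi(A)$ is a $C^*$-subalgebra of $B(H)$ and spectral permanence gives $\sigma_A(y)=\sigma_{B(H)}(\pi(y))$ for all $y$, so the hypothesis transfers verbatim to
\[
\sigma(PT)=\sigma(QT)\qquad(T\in M),
\]
where $M=\pi(A)$ acts irreducibly on $H$ and $P=\pi(a),\ Q=\pi(b)$. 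Reducing the general case to the primitive one, i.e.\ checking that the condition descends to the primitive quotients $A/\ker\pi$, is a first point that needs care, since passing to a quotient can only shrink a spectrum and so the two quotient spectra are a priori only squeezed into a common set.

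Granting the primitive reduction, the target is $P=Q$, and by polarization it is enough to compare the numbers $\langle P\xi,\eta\rangle$ and $\langle Q\xi,\eta\rangle$. The model is the rank one computation already recalled for $A=B(X)$: for $T=\eta\tnz\xi$ one has $\sigma(PT)=\{0,\langle P\eta,\xi\rangle\}$ and $\sigma(QT)=\{0,\langle Q\eta,\xi\rangle\}$, whence $\langle P\eta,\xi\rangle=\langle Q\eta,\xi\rangle$ and, varying $\xi,\eta$, $P=Q$. This argument runs verbatim whenever $M\supseteq K(H)$. For general $M$ I would replace rank one operators by the output of the Kadison transitivity theorem \cite{Ab} — the $C^*$-strengthening of the Sinclair density theorem already used in Theorem \ref{T1} — which realizes any prescribed action on a finite set by an element of $M$, and allows that element to be taken self-adjoint, unitary, or invertible. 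As in Theorem \ref{T1}, the role of such an element is to create a genuine eigenvector $PTv=\lambda v$; producing one eigenvector imposes only finitely many conditions on $T$, so no compactness is required, and any such $\lambda$ is automatically forced into $\sigma(PT)=\sigma(QT)$.

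The main obstacle is the case of an irreducible $\pi$ with $M\cap K(H)=\{0\}$, which genuinely occurs — for instance when $A$ is simple and infinite dimensional, where no representation can meet the compact operators. Here $M$ contains no rank one, indeed no nonzero compact, operator, and the clean computation above collapses: realizing $\lambda$ as an eigenvalue of $PT$ only yields the membership $\lambda\in\sigma(PT)=\sigma(QT)$, while the uncontrolled action of $T$ off a finite dimensional subspace contributes unknown spectrum, so that matching the full sets does not by itself pin down the numerical data of $P$ and $Q$.

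To break this deadlock I would try to confine the ambient spectrum rather than control it pointwise, imitating the device $\sigma(t^{-1}et)=\sigma(e)$ from the proof of Theorem \ref{T1}. Assuming $P\xi_0\ne Q\xi_0$ for a unit vector $\xi_0$, the aim is to use transitivity to build an invertible (or unitary) $T=\pi(x)$ for which $QT$ is forced to be quasinilpotent, or at least to have spectrum avoiding some prescribed value, while $PT$ retains a genuine nonzero eigenvalue coming from the low dimensional configuration of $\xi_0,\,P\xi_0,\,Q\xi_0$; the resulting membership would then contradict $\sigma(PT)=\sigma(QT)$. Arranging a single invertible $T$ that simultaneously kills the spectrum of $QT$ and preserves an eigenvalue of $PT$, using only Kadison transitivity and no compact operators, is the crux of the argument, and is where I expect essentially all of the difficulty to lie.
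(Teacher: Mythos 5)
Your proposal contains two genuine gaps, both of which you candidly flag but neither of which you close, so what you have is a strategy outline rather than a proof. First, the reduction to primitive quotients fails as stated: from $\sigma(ax)=\sigma(bx)$ one only gets $\sigma(\pi(ax))\subseteq\sigma(ax)=\sigma(bx)\supseteq\sigma(\pi(bx))$, i.e., both quotient spectra are squeezed into a common set, exactly as you observe; nothing forces them to remain equal in the quotient, and you offer no mechanism to restore equality. Second, even granting that reduction, the case of an irreducible representation whose image meets $K(H)$ trivially --- which, as you note, is the generic case for simple infinite-dimensional $C^*$-algebras --- is left entirely open: constructing an invertible $T$ that makes $QT$ quasinilpotent while $PT$ retains a prescribed nonzero eigenvalue is precisely the hard content, and there is no evidence that Kadison transitivity alone can produce such a $T$, since it controls the action only on a finite-dimensional subspace while the spectrum is a global invariant.

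The paper's proof is structured so that neither obstacle ever arises. It never attempts to transfer the equality $\sigma(ax)=\sigma(bx)$ into a representation; representations enter only through the one-sided containment $\sigma(\pi(y))\subseteq\sigma(y)$, which always holds, and this suffices because what is extracted is a single spectral point contradicting the reality of the spectrum of a self-adjoint element. Concretely: if $a=a^*$ but $b\ne b^*$, pick $\pi$ with $\pi(b)$ not self-adjoint, a unit vector $\xi$ with $\alpha=\langle\pi(b)\xi,\xi\rangle\notin\mathbb{R}$, and by Kadison transitivity a self-adjoint $t$ with $\pi(t)\xi=\xi$ and $\pi(t)\bigl(\pi(b)\xi-\alpha\xi\bigr)=0$; then $\alpha\in\sigma(tbt)\subseteq\sigma(bt^2)\cup\{0\}=\sigma(at^2)\cup\{0\}=\sigma(tat)\cup\{0\}$, contradicting that $tat$ has real spectrum. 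Thus $a=a^*$ forces $b=b^*$; replacing $x$ by $ax$ then gives $ab=ba$, which reduces the self-adjoint case to a commutative $C(K)$ problem settled by Urysohn's lemma; and the general case follows by applying the self-adjoint case to the pairs $(aa^*,ba^*)$ and $(bb^*,ab^*)$, yielding $(a-b)(a-b)^*=0$, hence $a=b$. The moral, relative to your plan, is that the substitute for the missing rank-one operators is not a stronger transitivity theorem but the $*$-structure itself: self-adjointness converts one-sided spectral containments into hard identities, which is exactly what your scheme lacks.
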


\begin{proof} The proof is divided into four steps.
\smallskip 
%Let us first prove a reduced form of the theorem, namely we will assume that $a=a^*$.

{\bf Claim 1:} {\em If $a=a^*$, then $b=b^*$}.
\smallskip 

%We begin by proving that $a=a^*$ implies $b=b^*$.
On the contrary, suppose that $b-b^*\neq 0$. Take an irreducible representation $\pi$ of $A$ on a Hilbert space $H$  such that $\pi(b-b^*)\neq 0$, i.e.,  
 $\pi(b)$ is not self-adjoint. 
Then there exists $\xi\in H$, $||\xi||=1$, such that $\alpha= \langle\pi(b)\xi,\xi\rangle\in \CC \setminus \R$. 
Then $\eta=  \pi(b)\xi-\alpha \xi$  satisfies $\langle \eta,\xi \rangle=0$. 
By Kadison's transitivity theorem (see, e.g., \cite[Theorem 5.2.2]{Mur}) there 
exists $t\in A$ such that  $\pi(t)\xi=\xi$, $\pi(t)\eta=0$, and $t=t^*$. 
Therefore $\pi(t)\pi(b)\pi(t)\xi=\alpha \xi$, which gives
 $$\alpha\in \s\bigl(\pi(t)\pi(b)\pi(t)\bigr)\subseteq \s(tbt)\subseteq \s(bt^2) \cup \{0\}=\s(at^2)\cup \{0\}=\s(tat)\cup \{0\}.$$ 
This is a contradiction since  $tat$ is self-adjoint and so its spectrum  contains  only real numbers. 

\smallskip
{\bf Claim 2:} {\em If $a=a^*$, then $ab = ba$}.
\smallskip 

Replacing $x$ by $ax$ in  $\s(ax)=\s(bx)$ we get  
$\s(a^2x) = \s(bax)$ for every $x\in A$. Since $a^2$ is self-adjoint, Claim 1 implies that $ba$ is self-adjoint, too. Since $b$ is also self-adjoint
by Claim 1, it follows that  $ab = ba$.

%From the preceding paragraph we deduce that  $(ax)^*=ax$ if and only if  $(bx)^*=bx$. 
%When $x^*=x$ this amounts to $[a,x]=0$ if and only if $[b,x]=0$.
%This further implies that the same is true for an arbitrary $x\in A$. In particular, $a$ and $b$ commute.

\smallskip
{\bf Claim 3:} {\em If $a=a^*$, then $a=b$.}
\smallskip 

Claims 1 and 2 imply that the   $C^*$-subalgebra of $A$  generated by  $a$ and $b$ is  commutative.
Since $\sigma(ax) = \sigma(bx)$ of course holds for every $x$ from this subalgebra, there is no loss of generality in assuming that 
$A$ is commutative. Thus it suffices to treat the case where $A=C(K)$, the algebra of all continuous functions on a compact Hausdorff space $K$. Suppose
$a\neq b$. Then there exists an open subset $U\subseteq K$ such that $a(U)\cap b(U)=\emptyset$.  
Without loss of generality it can be assumed that $\sup_{x\in U} |a(x)|\geq \sup_{x\in U} |b(x)|$. Choose  $x_0\in U$ such that  $|a(x_0)|\ge  \sup_{x\in U} |b(x)|$.  
We can apply Urysohn's lemma to obtain a continuous function $h:K\to [0,1]$ 
with ${\rm supp}(h)\subseteq U$ and $h(x_0)=1$. Hence
$a(x_0)\not \in bh(U)=bh(K)$, and therefore $ah(x_0)=a(x_0)\not \in \s(bh)$, contrary to our assumption.

\smallskip
{\bf Claim 4:} {\em If $a$ is arbitrary, then  $a=b$.}
\smallskip 
 
As special cases of  $\s(ax)=\s(bx)$, $x\in A$, we have 
 $\s(aa^*x)=\s(ba^*x)$, $x\in A$, and $\s(ab^*x)=\s(bb^*x)$, $x\in A$. 
From Claim 3 we infer that $aa^*=ba^*$ and $ab^*=bb^*$. Accordingly,
 $(a-b)(a^*-b^*)=0$, which results in $a=b$.
\end{proof}

\section{The condition $r(ax) \le r(bx)$} \label{S3}

 What to expect if elements $a$ and $b$ from a semisimple Banach algebra $A$ satisfy \eqref{sr}? An obvious possibility is that there exists  $u\in Z(A)$ such that $r(u) \le 1$ and 
$a=ub$. In fact, $u$ does not need to be central, it is enough to assume that it commutes with all elements from the right ideal $bA$. We shall see that, unfortunately, the possibility $a=ub$ is not the only one in general; however, in two interesting special cases it is.

\subsection{The invertible element case}
If $b$ is invertible, then the solution to our problem follows immediately from  Ptak's result \cite{Ptak}.

\begin{theorem} \label{Tpt}
Let $A$ be a semisimple Banach algebra and let $a,b\in A$ be such that  $r(ax) \le r(bx)$ for all $x\in A$. If $b$ is invertible, then there  exists $u\in Z(A)$ such that $r(u) \le 1$ and 
$a=ub$.
\end{theorem}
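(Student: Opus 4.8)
The plan is to reduce the statement to Pt\'ak's theorem by a change of variable that turns $b$ into the identity. Since $b$ is invertible, I would substitute $x \mapsto b^{-1}x$ into the hypothesis $r(ax)\le r(bx)$. The right-hand side becomes $r(b b^{-1}x) = r(x)$, while the left-hand side becomes $r(a b^{-1}x)$. Writing $c = ab^{-1}$, the condition reads
\begin{equation*}
r(cx) \le r(x)\quad\mbox{for all }x\in A.
\end{equation*}

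Next I would invoke Pt\'ak's result, quoted in the excerpt as the case $b=1$ of Problem \ref{P2}: if $r(cx)\le r(1\cdot x)=r(x)$ for all $x$, then $c\in Z(A)$. I should double-check that Pt\'ak's conclusion is stated exactly for the inequality form $r(cx)\le r(x)$ (the text says ``the conclusion in this case is that $a\in Z(A)$''), so I can apply it directly with $c$ in the role of $a$ and $1$ in the role of $b$. This gives $u:= c = ab^{-1}\in Z(A)$, and hence $a = ub$ with $u$ central. It remains only to control the spectral radius of $u$.

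For the bound $r(u)\le 1$, I would return to the transformed inequality $r(ux)\le r(x)$ and take $x=1$, giving $r(u)=r(u\cdot 1)\le r(1)=1$. This is immediate and requires no further work, so the proof is essentially a two-line reduction plus the citation.

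The only genuine point of care — which I would regard as the main (though minor) obstacle — is making sure Pt\'ak's theorem is being applied in precisely the form available. The paper cites it as giving $a\in Z(A)$ from $r(ax)\le r(x)$; if the original reference phrases its hypothesis slightly differently (for instance as an equality, or with the factors on the other side), I would need to verify that the left-multiplication version with an inequality is the one actually proved, or else transpose to right multiplication. Assuming the inequality form is exactly what \cite{Ptak} supplies, the argument closes cleanly, and the substitution $x\mapsto b^{-1}x$ is legitimate because $b^{-1}x$ ranges over all of $A$ as $x$ does.
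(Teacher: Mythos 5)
Your proposal is correct and is essentially identical to the paper's own proof: the paper also sets $u=ab^{-1}$, rewrites the hypothesis as $r(ux)\le r(x)$ for all $x\in A$, invokes Pt\'ak's result (cited there as \cite[Proposition 2.1]{Ptak}, with \cite[Theorem 2.2]{BBR} as an alternative source, which settles your worry about the exact form of the citation) to get $u\in Z(A)$, and obtains $r(u)\le 1$ by taking $x=1$.
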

 
\begin{proof} 
Set $u=ab^{-1}$.
Our assumption can be written as  $r(ux) \le r(x)$ for all $x\in A$. Hence $u\in Z(A)$
by \cite[Proposition 2.1]{Ptak} (see also \cite[Theorem 2.2]{BBR}). Letting $x=1$ we get $r(u)\le 1$.
\end{proof}

\subsection{Remarks on the $C^*$-algebra case}
From now on we confine ourselves to $C^*$-algebras. We begin with a useful rewording of condition \eqref{sr}.

\begin{lemma}\label{Lr}
Let $A$ be a $C^*$-algebra and let $a,b\in A$. The following conditions are equivalent:
\begin{enumerate}
\item[(i)] $r(ax)\le r(bx)$ for all $x\in A$.
\item[(ii)] $||yaz||\le ||ybz||$ for all $y,z\in A$. 
\end{enumerate}
\end{lemma}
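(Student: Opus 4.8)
The plan is to prove the equivalence using the two structural features of a $C^*$-algebra that are available to us: the $C^*$-identity $\|c\|^2=\|cc^*\|$ together with the fact that a positive element has norm equal to its spectral radius, and the elementary spectral identities $r(uv)=r(vu)$ and $r(w)=r(w^*)$ (the latter because $\sigma(w^*)=\overline{\sigma(w)}$). These let me pass freely between norms of positive elements and spectral radii, and to permute factors cyclically.

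For the implication (ii)$\Longrightarrow$(i) I would use the Gelfand--Beurling formula $r(ax)=\lim_{n}\|(ax)^n\|^{1/n}$. Writing $(ax)^n$ as a product of $n$ copies of $a$ interspersed with $x$, I would replace the copies of $a$ by $b$ one at a time: taking $y=(bx)^k$ and $z=x(ax)^{n-k-1}$ in condition (ii) gives $\|(bx)^k(ax)^{n-k}\|\le\|(bx)^{k+1}(ax)^{n-k-1}\|$, and telescoping these for $k=0,\dots,n-1$ yields $\|(ax)^n\|\le\|(bx)^n\|$. Taking $n$-th roots and letting $n\to\infty$ then gives $r(ax)\le r(bx)$. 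I expect this direction to be routine.

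The implication (i)$\Longrightarrow$(ii) is the substantive one. First I would convert the left-hand norm into a spectral radius: since $(yaz)(yaz)^*$ is positive, $\|yaz\|^2=\|yazz^*a^*y^*\|=r(yazz^*a^*y^*)$, and by cyclic invariance this equals $r(aqa^*p)$, where $p=y^*y$ and $q=zz^*$ are positive. The obvious move is to apply (i) with $x=qa^*p$, giving $r(aqa^*p)\le r(bqa^*p)$; but the right-hand side still carries an $a^*$ rather than the $b^*$ one wants, and this adjoint mismatch is the main obstacle.

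I would resolve it by applying (i) a \emph{second} time after a symmetrization. Using $r(w)=r(w^*)$ together with the self-adjointness of $p$ and $q$, one has $r(bqa^*p)=r\bigl((bqa^*p)^*\bigr)=r(paqb^*)$, which by cyclicity equals $r\bigl(a(qb^*p)\bigr)$; a second application of (i), now with $x'=qb^*p$, gives $r(aqb^*p)\le r(bqb^*p)=\|ybz\|^2$. Chaining the two inequalities produces $\|yaz\|^2\le\|ybz\|^2$, as required. The only delicate point is the bookkeeping of adjoints and cyclic permutations needed to line up the two invocations of (i); once that double application is arranged, the remaining computation is mechanical.
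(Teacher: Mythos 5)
Your proof is correct and takes essentially the same route as the paper's: the identical telescoping argument plus the spectral radius formula for (ii)$\Longrightarrow$(i), and for (i)$\Longrightarrow$(ii) the same double application of (i) with the adjoint identity $r(w)=r(w^*)$ used in between to trade $a^*$ for $b^*$. The only difference is organizational: the paper first observes that $yaz$ and $ybz$ themselves satisfy hypothesis (i) (by cyclic invariance of $r$), reducing everything to the inequality $\|a\|\le\|b\|$, whereas you carry the factors $p=y^*y$ and $q=zz^*$ through the computation; the underlying chain $r(aqa^*p)\le r(bqa^*p)=r(aqb^*p)\le r(bqb^*p)$ is the paper's $r(aa^*)\le r(ba^*)=r(ab^*)\le r(bb^*)$ with $p,q$ inserted.
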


\begin{proof} (i)$\Longrightarrow$(ii). Note that  $yaz$ and $ybz$ satisfy the same condition as $a$ and $b$, that is,
$$
r(yaz\cdot x) = r(azxy) \le r(bzxy) = r(ybz\cdot x).
$$
Therefore it suffices to show that (i) implies   $||a||\le ||b||$. And this is easy:
$$
||a||^2 = r(aa^*)\le r(ba^*) = r\bigl((ba^*)^*\bigl)= r(ab^*)\le r(bb^*)=||b||^2.
$$

(ii)$\Longrightarrow$(i). From (ii) we infer that
\begin{eqnarray*}
||(ax)^n|| &=& ||axaxax\ldots ax||\le ||bxaxax\ldots ax|| \\
&\le &||bxbxax\ldots ax|| \le 
 \ldots \le  ||bxbxbx\ldots bx|| \\
 &=&   ||(bx)^n||. 
\end{eqnarray*}
Therefore (i) follows from the spectral radius formula.
\end{proof} 

The following simple example indicates the delicacy of our problem.

\begin{example}
Let $A$ be the commutative $C^*$-algebra $C[-1,1]$, and let $a,b\in A$ be given by $a(t)=t$, $b(t)=|t|$. Then $$r(ax)=||ax||=||bx||= r(bx)\quad\mbox{ for all $x\in A$.}$$
 However, there does not exist $u\in A$ such that $a=ub$.
\end{example}

This  suggests that in order to derive $a = ub$ with $u\in Z(A)$ from \eqref{sr} it might be reasonable  to consider $C^*$-algebras whose center is small. In what follows we will deal with
{\em prime $C^*$-algebras}, i.e., $C^*$-algebras with the property that that the product of any two of their nonzero ideals is nonzero.  This is a fairly large class of $C^*$-algebras, which includes all primitive ones. It is known that such algebras have trivial centers, i.e., scalar multiples of $1$ are their only central elements.
Also, it is easy to see that only these elements  commute with every element from a nonzero right ideal.    

\subsection{Tools} In the course of the proof we will use several tools which are not standard in spectral theory. For the clarity of the exposition  we will therefore  state them  as lemmas. The first one is of crucial importance for our goal.

\begin{lemma} \label{Lzp}
Let $B$ be a $C^*$-algebra and let $X$ be a Banach space. If $\Phi:B\times B\to X$ is a continuous bilinear map such that $\Phi(y,z)=0$ whenever $y,z\in B$ satisfy $yz=0$, then 
$\Phi(yx,z)=\Phi(y,xz)$ for all $x,y,z\in B$.
\end{lemma}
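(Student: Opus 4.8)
The plan is to reduce to a scalar-valued form and then exploit idempotents. Since $X^*$ separates the points of $X$, composing $\Phi$ with functionals $f\in X^*$ shows that it suffices to prove the identity when $X=\CC$; so assume that $\Phi\colon B\times B\to\CC$ is a bounded bilinear form with $\Phi(y,z)=0$ whenever $yz=0$. Put $T(y,x,z)=\Phi(yx,z)-\Phi(y,xz)$; this bounded trilinear form is what we must show vanishes. A first observation is that $T$ already vanishes on many triples: if $yx=0$ then $\Phi(yx,z)=0$ while $y\cdot(xz)=(yx)z=0$ gives $\Phi(y,xz)=0$, so $T(y,x,z)=0$; symmetrically $T(y,x,z)=0$ whenever $xz=0$.

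The transparent core of the argument is the case of a von Neumann algebra, where projections abound. For a projection $e$, the orthogonality relations $(y(1-e))(ez)=0$ and $(ye)((1-e)z)=0$ yield, by hypothesis, $\Phi(y(1-e),ez)=0$ and $\Phi(ye,(1-e)z)=0$. Expanding these by bilinearity gives $\Phi(y,ez)=\Phi(ye,ez)$ and $\Phi(ye,z)=\Phi(ye,ez)$, hence $\Phi(ye,z)=\Phi(y,ez)$, i.e. $T(y,e,z)=0$ for every projection $e$. By the spectral theorem the complex-linear span of the projections is norm-dense, and $x\mapsto T(y,x,z)$ is bounded and linear, so $T\equiv 0$. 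Thus every von Neumann algebra satisfies the conclusion.

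To reach an arbitrary $C^*$-algebra I would pass to the enveloping von Neumann algebra $B^{**}$. As $C^*$-algebras are Arens regular, $\Phi$ has a unique extension to a separately weak$^*$-continuous bilinear form $\widetilde\Phi$ on the von Neumann algebra $B^{**}$; if $\widetilde\Phi$ again annihilates zero products, then the previous paragraph applies to $\widetilde\Phi$, and restricting the resulting identity to $y,x,z\in B\subseteq B^{**}$ finishes the proof.

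The main obstacle is exactly this transfer of the orthogonality hypothesis to $B^{**}$. It cannot be carried out by a soft approximation, since $UV=0$ is not stable under weak$^*$-limits and one cannot deduce $\widetilde\Phi(u_\nu,v_\nu)\to 0$ merely from $u_\nu v_\nu\to 0$; genuine $C^*$-structure is needed. The phenomenon is already visible, and provable, in the commutative model $B=C(K)$: by Grothendieck's theorem $\Phi$ is represented by a regular measure on $K\times K$, the hypothesis forces this measure onto the diagonal, and then $\Phi(f,g)=\int_K fg\,d\nu$, from which $\Phi(fh,g)=\Phi(f,hg)$ is immediate. The general case is considerably more involved; it is precisely the statement that $C^*$-algebras have property $\mathbb{B}$ in the sense of Alaminos, Bre\v sar, Extremera and Villena, and in practice I would invoke their theorem for the bidual transfer while keeping the von Neumann computation above as the conceptual heart.
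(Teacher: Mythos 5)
You should know at the outset that the paper does not actually prove Lemma \ref{Lzp} from scratch: its entire proof is the citation \cite[Theorem 2.11 and Example 2, p.~137]{ABEV}, i.e.\ the statement that $C^*$-algebras have property $\mathbb{B}$ in the sense of Alaminos, Bre\v sar, Extremera and Villena. Since your final paragraph falls back on invoking exactly that theorem, your proposal is, in its load-bearing part, the same proof as the paper's. What you add on top is partly genuine value: the reduction to scalar-valued $\Phi$ via functionals in $X^*$ is correct, and the von Neumann algebra argument is a complete and pleasantly elementary proof in that setting --- the orthogonality relations $\bigl(y(1-e)\bigr)(ez)=0$ and $(ye)\bigl((1-e)z\bigr)=0$ do give $\Phi(y,ez)=\Phi(ye,ez)=\Phi(ye,z)$ for every projection $e$, and norm-density of the span of projections together with boundedness of $x\mapsto \Phi(yx,z)-\Phi(y,xz)$ finishes it. The same computation in fact covers any $C^*$-algebra of real rank zero.

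But as a self-contained proof the proposal has a genuine gap, which you diagnose correctly and then do not close: the zero-product hypothesis does not transfer to the Arens bidual. Separate weak$^*$-continuity of $\widetilde{\Phi}$ only allows you to compute iterated limits along Goldstine nets, and for $u,v\in B^{**}$ with $uv=0$ the approximants $y_\alpha z_\beta\in B$ need not be zero or even norm-small, so the hypothesis on $\Phi$ can never be brought to bear; this transfer is not a technical nuisance but the entire difficulty, and it is precisely what property $\mathbb{B}$ encapsulates. Moreover, your commutative ``model'' contains an unflagged error: Grothendieck's theorem does not represent a bounded bilinear form on $C(K)\times C(K)$ by a regular measure on $K\times K$. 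Such a representation holds only for the integral forms (the functionals on the injective tensor product, i.e.\ on $C(K\times K)$), and for infinite $K$ there exist bounded bilinear forms on $C(K)\times C(K)$ that are not integral; the diagonal representation $\Phi(f,g)=\int_K fg\,d\nu$ is exactly the conclusion of the lemma for $C(K)$, not something you may assume at the start. The honest summary is therefore: your von Neumann computation is a nice supplement that makes the mechanism transparent, but the general case in your write-up rests, just as in the paper, on citing \cite{ABEV}.
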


\begin{proof}
This result actually holds for a large class of Banach algebras which includes $C^*$-algebras; see \cite[Theorem 2.11 and Example 2, p. 137]{ABEV}. 
\end{proof}

\begin{lemma} \label{Lec}
Let $A$ be a prime $C^*$-algebra. Suppose $a,b,c,d\in A$ satisfy $axb = cxd$ for all $x\in A$. If $a\ne 0$, then  $b\in \mathbb C d$. Similarly, if $b\ne 0$, then $a\in \mathbb C c$.
\end{lemma}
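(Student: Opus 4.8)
The plan is to use primeness to collapse the two‑sided identity $axb=cxd$ into a single symmetric identity involving only $b$ and $d$, and then to extract a scalar from that identity.

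First I would clear away the degenerate cases. If $b=0$ there is nothing to prove, since $0\in\mathbb C d$; so assume $b\neq 0$. Were $d=0$, the hypothesis would read $axb=0$ for all $x$, that is $aAb=\{0\}$, and primeness together with $a\neq 0$ would force $b=0$, a contradiction; hence $d\neq 0$. The identical argument shows $c\neq 0$, because $c=0$ again gives $aAb=\{0\}$. Thus in the remaining case all four elements are nonzero.

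The key manipulation is a double substitution. Evaluating $axb=cxd$ with $xbu$ in place of $x$ gives $axbub=cxbud$, while right multiplying the original identity by $ub$ gives $axbub=cxdub$. Subtracting these yields $cx(bud-dub)=0$ for all $x,u\in A$, i.e. $cA(bud-dub)=\{0\}$ for each fixed $u$. Since $c\neq 0$, primeness delivers the symmetric ``centralizing'' identity
$$ bud=dub\qquad\text{for all } u\in A. $$

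It remains to pass from this identity to $b\in\mathbb C d$, and I expect this to be the main obstacle, since it is precisely here that primeness (rather than mere semisimplicity) is indispensable, as the remarks preceding this subsection indicate. Concretely, I would fix an irreducible representation $\pi$ on a Hilbert space $H$ with $\pi(d)\neq 0$, so that $\pi(b)\pi(u)\pi(d)=\pi(d)\pi(u)\pi(b)$ for all $u$. Using Kadison's transitivity theorem to let $\pi(u)$ realize arbitrary rank‑one operators, this upgrades to $\pi(b)S\pi(d)=\pi(d)S\pi(b)$ for every rank‑one (hence, by density, every) $S\in B(H)$; testing against the rank‑one operators $\chi\mapsto\langle\chi,\xi\rangle\eta$ then forces $\pi(b)\eta\in\mathbb C\,\pi(d)\eta$ for all $\eta$, and Lemma \ref{Lld} gives $\pi(b)\in\mathbb C\,\pi(d)$. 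The delicate part is that the resulting scalar must be shown to be the \emph{same} across all representations; this is the content of the fact that a prime $C^*$-algebra is centrally closed with extended centroid $\mathbb C$, which converts the relation $bud=dub$ into genuine linear dependence $b\in\mathbb C d$. (Equivalently, one may simply invoke the standard functional‑identity consequence of primeness for the identity $axb=cxd$.)

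The second assertion follows from the first by symmetry. Replacing $x$ by $x^*$ in $axb=cxd$ and applying the involution gives $b^*xa^*=d^*xc^*$ for all $x\in A$; the first part, applied to the quadruple $(b^*,a^*,d^*,c^*)$, shows that $b\neq 0$ (that is, $b^*\neq 0$) implies $a^*\in\mathbb C c^*$, whence $a\in\mathbb C c$.
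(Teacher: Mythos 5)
Your argument is correct, and the elementary preprocessing is sound: the nondegeneracy checks, the double substitution giving $cx(bud-dub)=0$ for all $x,u$ and hence (by primeness and $c\ne 0$) the symmetric identity $bud=dub$, and the involution trick $b^*xa^*=d^*xc^*$ that derives the second assertion from the first. But you should be clear that at the decisive step your proof rests on exactly the same external fact as the paper's, whose entire proof consists of citing Martindale's theorem for prime rings \cite{Mart} together with the fact that the extended centroid of a prime $C^*$-algebra is $\mathbb{C}$ \cite[Proposition 2.2.10]{AM}. The reduction to $bud=dub$ buys nothing essential: that identity is an instance of the very same Martindale-type problem, and your representation-theoretic sketch cannot be completed by elementary means. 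A prime $C^*$-algebra need not be primitive, so no faithful irreducible representation need exist; in representations with $\pi(d)=0$ the identity gives no information whatsoever about $\pi(b)$; and gluing the per-representation scalars into a single $\lambda$ with $b=\lambda d$ is precisely what central closure accomplishes---as you yourself concede. (A minor simplification: within a fixed irreducible representation with $\pi(d)\ne 0$, your rank-one computation already produces one scalar valid for all vectors $\eta$, so Lemma \ref{Lld} is not actually needed there.) In sum, your proof is correct modulo the same citation the paper uses, and once that citation is granted, Martindale's lemma applies directly to $axb=cxd$, making the rest optional; what your route adds is an honest isolation of the single nontrivial ingredient and a correct, self-contained reduction of the general identity to the symmetric ``commuting'' case.
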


\begin{proof}
This result is basically due to Martindale \cite{Mart} and it actually holds for general prime rings, just that $\mathbb C$ must be replaced by the so-called extended centroid (a certain extension of the center). It is a fact that the extended centroid  of a prime $C^*$-algebra is equal to $\mathbb C$ \cite[Proposition 2.2.10]{AM}. 
\end{proof}

In the next lemma we consider a special {\em functional identity} which can be handled by elementary means, avoiding the general theory \cite{FIbook}.   At the beginning of the proof we will use an idea from \cite[Example 1.4]{FIbook}.

\begin{lemma} \label{Lfi}
Let $A$ be a prime $C^*$-algebra. Suppose there exist a map $f:A\to A$ and $c\in A$ such that
 $$f(x)yc + f(y)xc=0\quad\mbox{ for all $x,y\in A$.}$$
 If $f\ne 0$ and $c\ne 0$,  then there exists a faithful irreducible representation $\pi$ of $A$ on a Hilbert space $H$ such that $\pi(A)$ contains $K(H)$, the algebra of all compact operators on $H$.
\end{lemma}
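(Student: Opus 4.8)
The plan is to show that the functional identity forces $cAc\subseteq\mathbb{C}c$, i.e.\ that $c$ is a ``rank-one'' element; once this is established, the desired representation comes from the standard structure theory of prime $C^*$-algebras with nonzero socle. To begin, since $f\neq 0$ I would fix some $x_0\in A$ with $p:=f(x_0)\neq 0$. Substituting $y=x_0$ into the hypothesis gives the basic relation
$$f(x)\,x_0c=-p\,xc\qquad(x\in A).$$

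The core of the argument (this is where I would borrow the idea from \cite[Example 1.4]{FIbook}) is to evaluate the product $f(x)\,x_0c\,z\,c$ in two ways, for arbitrary $x,z\in A$. On the one hand, peeling off the factor $f(x)x_0c$ by the displayed relation gives $f(x)\,x_0czc=(f(x)x_0c)(zc)=-p\,xczc$. On the other hand, reading $x_0czc=(x_0cz)c$ and applying the hypothesis with $x_0cz$ in the role of $y$ gives $f(x)(x_0cz)c=-f(x_0cz)\,xc$. Comparing the two expressions yields
$$p\,x\,(czc)=f(x_0cz)\,x\,c\qquad(x,z\in A).$$
For each fixed $z$ this is an identity of the form $axb=c'xd'$ in the free variable $x$ with $a=p\neq 0$, so Lemma \ref{Lec} applies and gives $czc\in\mathbb{C}c$. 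As $z$ is arbitrary, $cAc\subseteq\mathbb{C}c$.

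Primeness then gives $cAc=\mathbb{C}c$ (it cannot be $\{0\}$), so $c=cz_0c$ for some $z_0\in A$ and $e:=cz_0$ is a nonzero idempotent; since $eae=\bigl(c(z_0a)c\bigr)z_0\in (cAc)z_0=\mathbb{C}e$, the idempotent $e$ is minimal and $A$ has nonzero socle. Replacing $e$ by a minimal projection $p_0$ with $Ap_0=Ae$ and taking $\pi$ to be the irreducible representation of $A$ on the Hilbert space $H=\overline{Ap_0}$, I would note that $\pi$ is faithful by primeness (if $\pi(a)=0$ then $aAp_0=0$, whence $a=0$), while minimality of $p_0$ makes $\pi(p_0)$ a rank-one projection; a $C^*$-algebra acting irreducibly and containing a nonzero finite-rank operator contains all of $K(H)$ (see, e.g., \cite{Mur}), so $K(H)\subseteq\pi(A)$. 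The main obstacle is the bookkeeping that produces the clean Martindale-type identity $p\,x\,(czc)=f(x_0cz)\,x\,c$ from the two-way computation; once that identity is in hand Lemma \ref{Lec} does the essential work, and the final passage from $cAc=\mathbb{C}c$ to the representation is routine $C^*$-theory.
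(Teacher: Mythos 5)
Your proof is correct and follows essentially the same route as the paper: your identity $p\,x\,(czc)=f(x_0cz)\,x\,c$ is exactly the paper's computation $f(y)xczc=-f(x)yczc=f(ycz)xc$ (with $y=x_0$ in the role of $y$), after which Lemma \ref{Lec} yields $cAc\subseteq\mathbb{C}c$ in both arguments, and the endgame is the same left-multiplication representation on a minimal left ideal. The only (cosmetic) difference is in producing the minimal projection: you extract an idempotent $e=cz_0$ and invoke the standard idempotent-to-projection conversion, whereas the paper takes $e=\alpha^{-1}c^*c$ directly (using $(c^*c)^2=\alpha c^*c$), which is already self-adjoint and avoids that extra step.
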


\begin{proof}
 Our assumption implies that for all $x,y,z\in A$ we have 
$$
f(y)xczc=- f(x)yczc = f(ycz)xc.
$$
Fixing $y\in A$ such that $f(y)\ne 0$ we infer from  Lemma \ref{Lec} that for every $z\in A$ there exists $\lambda_z\in \mathbb C$ such that $czc = \lambda_z c$.  Consequently, $(c^*c)^2 = \alpha c^*c$ for some  $\alpha\in\mathbb R\setminus{\{0\}}$. Note that $e = \alpha^{-1}c^*c$ satisfies $e^2 = e = e^*$ and $eAe=\mathbb C e$, so we may identify $eAe$ with $\mathbb C$. We endow $Ae$ with an inner product $\langle ae, be\rangle=eb^*ae$. Note that the inner product norm  coincides with the original norm on $Ae$, and is therefore complete.
We  denote the corresponding Hilbert space by $H$. Define $\pi:A\to B(H)$ according to $\pi(a)\xi = a\xi$, $a\in A$, $\xi\in H$, and note that $\pi$ is an irreducible representation of $A$ on $H$. Moreover, a faithful one since $A$ is prime. Since $\pi(e)$ is a rank one operator, it follows that $K(H)\subseteq \pi(A)$ (see, e.g., \cite[Theorem 2.4.9]{Mur}).
 \end{proof}

%The latter implies that 
%$I=Ae$ is a minimal left ideal of $A$. This is well-known, but let us give a proof for the sake of completness. Take a nonzero left ideal $L$ of $A$ such that
%$L\subseteq I$. Pick $0\ne \ell\in L$. Since $A$ is prime there exists $y\in A$ such that $ey\ell\ne 0$.  As $\ell \in L\subseteq Ae$, we have $ey\ell = \mu e$ for some  $\mu\in \mathbb C\setminus{\{0\}}$.
%Therefore $c = \mu^{-1}cy\ell \in L$, which yields $I\subseteq L$. Therefore $I$ is indeed minimal. Hence there exists  a projection $p=p^*\in A$ such that $I=Ap$ and $pAp=\mathbb Cp$
 %\cite[BA.4.3]{Bar}, so we can identify $pAp$ with $\mathbb C$. We endow $Ap$ with an inner product $\langle ap, bp\rangle=pb^*ap$ and denote the corresponding Hilbert space by $H$. (Note that the scalar product norm on $Ap$ coincides with its original norm, which asserts completeness of $H$.) Now $A$ can be embedded in $B(H)$ via left multiplication on $H=Ap$. It is easy to check that this embedding is $*$-homomorphism. It is also injective because $A$ is prime. Moreover, minimality of $Ap$ asserts that $A$ acts irreducibly on $H$. Thus, we have an irreducible faithful representation $\pi$ of $C^*$-algebra $A$ into $B(H)$ with $\pi(p)\in K(H)\cap \pi(A)\neq \{0\}$.  Using \cite[Theorem 2.4.9]{Mur} we deduce that $K(H)\subseteq \pi(A)$.  

\subsection{The prime $C^*$-algebra case} We now have enough information to prove the main result of this section.

\begin{theorem}\label{Tr}
Let $A$ be a prime $C^*$-algebra and let $a,b\in A$ be such that  $r(ax) \le r(bx)$ for all $x\in A$. Then there  exists $\lambda\in \mathbb C$ such that $|\lambda| \le 1$ and 
$a=\lambda b$.
\end{theorem}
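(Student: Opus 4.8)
The plan is to first translate the hypothesis into the more workable metric form provided by Lemma \ref{Lr}, and then to extract from it an algebraic identity to which the rigidity results of Lemmas \ref{Lec} and \ref{Lfi} apply. By Lemma \ref{Lr}, the assumption $r(ax)\le r(bx)$ is equivalent to $\|yaz\|\le\|ybz\|$ for all $y,z\in A$. In particular $ybz=0$ forces $yaz=0$, and taking adjoints shows that the pair $a^*,b^*$ satisfies the same hypothesis. If $b=0$ then $a=0$ and the conclusion is trivial, so I assume $a,b\ne 0$; note also that $y=z=1$ already gives $\|a\|\le\|b\|$, which will supply the bound $|\lambda|\le 1$ at the very end. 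I would record the one-sided consequences $bz=0\Rightarrow az=0$ and $yb=0\Rightarrow ya=0$, so that $z\mapsto az$ factors through $z\mapsto bz$; this yields a contractive right $A$-module map $T$ on $\overline{bA}$ with $T(bz)=az$, the first sign that $a$ is ``dominated'' by $b$.

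The heart of the argument is to convert the zero-product implication $ybz=0\Rightarrow yaz=0$ into an honest algebraic identity. Here I would invoke Lemma \ref{Lzp}: after assembling from $a$ and $b$ a continuous bilinear map on $A\times A$ that vanishes on pairs with zero product (using the implication above, together with a factorization argument on the closed ideal $\overline{AbA}$ to pass from the three-variable condition to genuine zero products), the lemma yields a shift identity of the form $\Phi(yx,z)=\Phi(y,xz)$. Unravelling this, I expect to arrive at a functional identity precisely of the shape treated in Lemma \ref{Lfi}, namely
$$f(x)\,y\,c+f(y)\,x\,c=0\qquad\text{for all }x,y\in A,$$
with $f$ and $c$ built from $a$ and $b$ (for instance $c$ assembled from $b$, and $f$ measuring the failure of the symmetric relation $axb=bxa$).

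At this point the argument splits according to Lemma \ref{Lfi}. In the generic case one of the degeneracies $f=0$ or $c=0$ occurs, and the functional identity collapses to a Martindale-type identity $\alpha x\beta=\gamma x\delta$; since the extended centroid of a prime $C^*$-algebra is $\mathbb{C}$, Lemma \ref{Lec} then delivers $a\in\mathbb{C}b$ at once. In the remaining case $f\ne 0$ and $c\ne 0$, Lemma \ref{Lfi} furnishes a faithful irreducible representation $\pi$ of $A$ on a Hilbert space $H$ with $K(H)\subseteq\pi(A)$. Here I would exploit the rank-one operators exactly as in the elementary $B(X)$ computation recalled in the introduction: the inequality $\|\pi(y)\pi(a)\pi(z)\|\le\|\pi(y)\pi(b)\pi(z)\|$, tested against rank-one $\pi(y),\pi(z)$ supplied by Kadison transitivity, forces $\pi(a)\xi\in\mathbb{C}\,\pi(b)\xi$ for every $\xi\in H$, whence $\pi(a)\in\mathbb{C}\pi(b)$ by Lemma \ref{Lld}; faithfulness then gives $a\in\mathbb{C}b$.

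Either way $a=\lambda b$ for some $\lambda\in\mathbb{C}$, and $|\lambda|\,\|b\|=\|a\|\le\|b\|$ forces $|\lambda|\le 1$, completing the proof. The main obstacle is the middle step: choosing the bilinear map so that Lemma \ref{Lzp} applies, and checking that the resulting shift identity genuinely reduces to the functional identity of Lemma \ref{Lfi}. The delicacy is that the hypothesis is one-sided and purely metric, so the passage to a (symmetric) algebraic identity is not automatic; it is exactly the degenerate branch of this passage that cannot be closed by Martindale's lemma alone and thereby forces the appeal to Lemma \ref{Lfi} and the compact-operator picture.
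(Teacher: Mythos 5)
Your overall architecture coincides with the paper's: Lemma \ref{Lr} to pass to the norm inequality $\|yaz\|\le\|ybz\|$, a functional identity of the form $f(x)yc+f(y)xc=0$, a case split in which the degenerate branch is closed by Lemma \ref{Lec} and the nondegenerate branch by Lemma \ref{Lfi}, and the rank-one endgame (comparing $\pi(a)(\xi\otimes\eta)$ with $\pi(b)(\xi\otimes\eta)$ and invoking Lemma \ref{Lld}), which is essentially the paper's Claim 3. However, the step you yourself flag as the main obstacle is a genuine gap, and your sketched route to it does not work. Lemma \ref{Lzp} requires a bilinear map that vanishes on \emph{all} pairs with $yz=0$; what your hypothesis gives is only the implication $ybz=0\Rightarrow yaz=0$. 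In a noncommutative $A$, $yz=0$ does not imply $ybz=0$, so the natural choice $\Phi(y,z)=yaz$ on $A\times A$ simply fails the hypothesis of Lemma \ref{Lzp}, and no factorization argument on $\overline{AbA}$ repairs this: the whole difficulty is to slip $b$ between $y$ and $z$, which is exactly what is unavailable.

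The paper's resolution is to apply Lemma \ref{Lzp} not on $A\times A$ but on $B\times B$, where $B$ is the \emph{commutative} $C^*$-subalgebra generated by a self-adjoint element: there $yz=0$ does force $ybz=yzb=0$, hence $yaz=0$ by Lemma \ref{Lr}, so $\Phi(y,z)=yaz$ qualifies, and the shift identity with $y=z=1$, $x=b$ yields Claim 1: if $b=b^*$ then $ab=ba$. The functional identity is then produced by a bootstrap that your sketch lacks: substitute $sb^*x$ for $x$ in the original hypothesis (with $s=s^*$), note that the new pair $(asb^*,bsb^*)$ again satisfies the hypothesis and that $bsb^*$ is self-adjoint, so Claim 1 applied to this pair gives $(asb^*)(bsb^*)=(bsb^*)(asb^*)$, i.e.\ $f(s)sb^*=0$ for $f(x)=axb^*b-bxb^*a$; linearizing in $s$ and splitting a general element into self-adjoint parts gives exactly $f(x)yb^*+f(y)xb^*=0$, i.e.\ the identity of Lemma \ref{Lfi} with $c=b^*$. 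So your skeleton and endgames (Lemma \ref{Lec} when $f=0$, Lemma \ref{Lfi} plus rank-one operators when $f\ne 0$) match the paper's Claims 3 and 4, but without the commutative-subalgebra trick and the self-adjoint bootstrap the central implication remains unproved, and as proposed it cannot be obtained from Lemma \ref{Lzp} on $A\times A$.
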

 
 \begin{proof} Obviously  it suffices to prove that $a\in \mathbb C b$. 
We divide the proof into four steps.
\smallskip 
%Let us first prove a reduced form of the theorem, namely we will assume that $a=a^*$.

{\bf Claim 1:} {\em If $b=b^*$, then $ab=ba$}.

\smallskip 
 Let $B$ be the $C^*$-algebra generated by $b$. Define $\Phi:B\times B\to A$ by $\Phi(y,z)=yaz$. Since $B$ is commutative, $yz=0$ implies $ybz=0$. According to Lemma \ref{Lr} this further gives
 $\Phi(y,z)=0$. Lemma \ref{Lzp} therefore tells us that $\Phi(yx,z) = \Phi(y,xz)$  for all $x,y,z\in B$. Setting $y=z=1$ and $x=b$ we get $ab=ba$.

\smallskip

Define $f:A\to A$ by $f(x)=axb^*b - bxb^*a$.

\smallskip

{\bf Claim 2:} {\em $f(x)yb^* + f(y)xb^* =0$ for all $x,y\in A$.}
\smallskip 

Take a self-adjoint $s\in A$.
Substituting $sb^*x$ for $x$ in $r(ax) \le r(bx)$ we get $r(asb^*x)\le r(bsb^*x)$ for every $x\in A$.  Since $bsb^*$ is self-adjoint, Claim 1 implies that $(asb^*)(bsb^*) = (bsb^*)(asb^*)$, i.e.,
$f(s)sb^* =0$ holds 
 for an arbitrary self-adjoint $s\in A$. Replacing $s$ by $s+t$ with both $s,t$ self-adjoint it follows that $f(s)tb^* + f(t)sb^* =0$. Since every element in $A$ is a linear combination of two self-adjoint elements, the desired conclusion follows.
 
\smallskip

{\bf Claim 3:} {\em If $f\ne 0$, then $a\in \mathbb C b$.}

\smallskip 

Lemma \ref{Lfi} says that there exists a faithful representation $\pi$ of $A$ on a Hilbert space $H$ such that $K(H)\subseteq \pi(A)$.   By $\xi\otimes \eta$ we denote the rank one operator given by $(\xi\otimes \eta)\omega = \langle\omega,\eta\rangle \xi$. Note that
$\sigma_{B(H)}(\xi\otimes \eta) = \{0, \langle \xi,\eta\rangle\}$ and that $A(\xi\otimes \eta) = A\xi\otimes \eta$ for every $A\in B(H)$.
 Of course, $\xi\otimes \eta\in \pi(A)$, and hence
  $$r\bigl(\pi(a)(\xi\otimes \eta)\bigr)\le r\bigl(\pi(b)(\xi\otimes \eta)\bigr).$$
   That is, 
 $$
 |\langle \pi(a)\xi,\eta\rangle| \le |\langle \pi(b)\xi,\eta\rangle|, 
 $$
 where $\xi$ and $\eta$ are arbitrary vectors in $H$. If $\pi(a)\xi$ was not a scalar multiple of  $\pi(b)\xi$, then we could find $\eta$ such that $\langle \pi(a)\xi,\eta\rangle\ne 0$ and 
 $\langle \pi(b)\xi,\eta\rangle=0$ -- a contradiction.  Therefore $\pi(a)\xi\in \mathbb C \pi(b)\xi$ for every $\xi\in H$, hence  $\pi(a)\in \mathbb C \pi(b)$ by Lemma \ref{Lld}, and so $a\in \mathbb C b$.
 
\smallskip

{\bf Claim 4:} {\em If $f= 0$, then $a\in \mathbb C b$.}

\smallskip 
The result is trivial if $b=0$, so let $b\ne 0$. We are assuming that $axb^*b = bxb^*a$ holds for every $x\in A$. Since $b^*b\ne 0$, we have $a\in \mathbb C b$ by Lemma \ref{Lec}.
 \end{proof}

\begin{corollary}\label{TrC}
Let $A$ be a prime $C^*$-algebra and let $a,b\in A$ be such that  $r(ax) = r(bx)$ for all $x\in A$. Then there  exists $\lambda\in \mathbb C$ such that $|\lambda| = 1$ and 
$a=\lambda b$.
\end{corollary}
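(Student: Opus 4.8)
The plan is to invoke Theorem \ref{Tr} in both directions. The equality $r(ax)=r(bx)$ for all $x\in A$ is equivalent to the conjunction of the two one-sided conditions $r(ax)\le r(bx)$ and $r(bx)\le r(ax)$ for all $x$, so Theorem \ref{Tr} applies once as stated and once with the roles of $a$ and $b$ interchanged. From $r(ax)\le r(bx)$ I obtain $\lambda\in\mathbb C$ with $|\lambda|\le 1$ and $a=\lambda b$; from $r(bx)\le r(ax)$ I obtain $\mu\in\mathbb C$ with $|\mu|\le 1$ and $b=\mu a$.

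The only remaining task is to strengthen $|\lambda|\le 1$ to $|\lambda|=1$. Here I would first note that $a$ and $b$ vanish together, using only the relations just obtained: if $b=0$ then $a=\lambda b=0$, and conversely if $a=0$ then $b=\mu a=0$. In the degenerate case $a=b=0$ I simply replace $\lambda$ by $1$, which satisfies $|\lambda|=1$ and $a=0=\lambda b$. Note that this avoids any appeal to semisimplicity, since the two proportionality relations already force the dichotomy.

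Assume then that $a\ne 0$ and $b\ne 0$. Substituting $b=\mu a$ into $a=\lambda b$ gives $a=\lambda\mu a$, and since $a\ne 0$ this yields $\lambda\mu=1$. Combined with $|\lambda|\le 1$ and $|\mu|\le 1$, the identity $|\lambda|\,|\mu|=1$ forces $|\lambda|=|\mu|=1$, which is exactly the desired conclusion. I do not anticipate any genuine obstacle: Theorem \ref{Tr} does all the heavy lifting, and the only point requiring a moment's care is recognizing that the multiplicative constraint $\lambda\mu=1$ together with the two modulus bounds is precisely what upgrades $|\lambda|\le 1$ to $|\lambda|=1$.
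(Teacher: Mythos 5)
Your proof is correct and is precisely the argument the paper intends: the paper states this corollary without proof as an immediate consequence of Theorem \ref{Tr}, applied to both one-sided inequalities. Your symmetrization, the observation that $a$ and $b$ vanish together, and the step $\lambda\mu=1$ with $|\lambda|,|\mu|\le 1$ forcing $|\lambda|=|\mu|=1$ fill in exactly the omitted details.
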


\section{Spectral characterizations of multiplicative maps}

Let $A_0$ and $A$ be Banach algebras, and let $\varphi:A_0\to A$ be a surjective linear map such that 
\begin{equation}\label{e2}
\sigma\bigl(\varphi(x)\bigr) = \sigma(x) \quad\mbox{for all $x\in A_0$.}
\end{equation}  
 Under what conditions   $\varphi$ is a Jordan homomorphism?
This is a classical problem in the Banach algebra theory, initiated by Kaplansky in \cite{K}.
It is expected that a sufficient condition is that $A$ is a $C^*$-algebra, or maybe even a general semisimple Banach algebra. In spite of considerable efforts of numerous authors, the problem seems to be out of reach at such level of generality; see, e.g., \cite{BS2} for historic comments. One is therefore inclined to consider modifications of  \eqref{e2} that can be handled and may give some light on 
the classical situation. In \cite{Mol} Molnar described not necessarily linear surjective maps $\varphi$ satisfying 
\begin{equation}\label{e1}
\sigma\bigl(\varphi(x)\varphi(y)\bigr) = \sigma(xy) \quad\mbox{for all $x,y\in A_0$}
\end{equation}
in the case where $A_0=A=B(H)$  or $A_0=A=C(K)$. These results have been extended in different directions (see  \cite{HLW, LT, TL} and references therein), but these generalizations also deal only with 
 some special algebras. It  seems that it is not easy to treat \eqref{e1} in general classes of algebras. We will consider similar, but more easily approachable conditions \eqref{es} and \eqref{er}.
 Using the results of the previous sections we will be able to  handle them in quite general algebras.

\subsection{The condition $\sigma\bigl(\varphi(x)\varphi(y)\varphi(z)\bigr) = \sigma(xyz)$ } We begin with an application of Theorem \ref{T}.
%This can be very useful when studying spectral properties.

\begin{corollary}\label{LLL} Let $A_0$ and $A$ be Banach algebras with $A$ semisimple. Let $\varphi:A_0\to A$ be a surjective map satisfying $\sigma\bigl(\varphi(x)\varphi(y)\varphi(z)\bigr) = \sigma(xyz)$ for all
$x,y,z\in A_0$. Then $\varphi(1)\in Z(A)$, $\varphi(1)^3 =1$, and $\varphi(xy) = \varphi(1)^2 \varphi(x)\varphi(y)$ for all invertible $x,y\in A_0$.
\end{corollary}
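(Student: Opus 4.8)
The plan is to extract structural information about $\varphi$ directly from the hypothesis $\sigma(\varphi(x)\varphi(y)\varphi(z)) = \sigma(xyz)$ by making clever substitutions, and then to invoke Theorem \ref{T} at the decisive moment. First I would substitute $x=y=z=1$ to obtain $\sigma(\varphi(1)^3) = \sigma(1) = \{1\}$, and more importantly I would try to show that $\varphi(1)$ behaves like a central cube root of unity. To get centrality, the natural move is to fix two of the three arguments as $1$ and vary the third: setting $y=z=1$ gives $\sigma(\varphi(x)\varphi(1)^2) = \sigma(x)$, and other placements of the single variable argument give $\sigma(\varphi(1)\varphi(x)\varphi(1)) = \sigma(x)$ and $\sigma(\varphi(1)^2\varphi(x)) = \sigma(x)$. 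Comparing these should force $\varphi(1)$ to commute (spectrally) with everything in the range, and since $\varphi$ is surjective and $A$ is semisimple, I expect to conclude $\varphi(1)\in Z(A)$.

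Once $\varphi(1)=:u$ is known to be central, the relation $\sigma(\varphi(x)\varphi(1)^2)=\sigma(x)$ reads $\sigma(u^2\varphi(x))=\sigma(x)$. Applying this with a product $xy$ in place of $x$, and exploiting that $u$ is central so that powers of $u$ can be collected, I would aim to compare $\sigma\bigl(\varphi(xy)\bigr)$-type expressions with $\sigma\bigl(\varphi(x)\varphi(y)\bigr)$-type expressions. The key computation is to write the three-variable hypothesis with a cleverly chosen third slot: for instance $\sigma(\varphi(x)\varphi(y)\varphi(1)) = \sigma(xy)$ and compare it against $\sigma(\varphi(xy)\varphi(1)^2)=\sigma(xy)$. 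The goal is to produce two elements of $A$, namely (roughly) $u^2\varphi(xy)$ and $u^2\varphi(x)\varphi(y)$, that have the same spectrum after multiplication by an arbitrary element — precisely the setup of Problem \ref{P}.

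The heart of the argument, and the step where Theorem \ref{T} enters, is to reduce the equality of the two maps $xy\mapsto \varphi(xy)$ and $xy\mapsto u^2\varphi(x)\varphi(y)$ (on invertible $x,y$) to a statement of the form $\sigma(a\,w)=\sigma(b\,w)$ for all $w$, where $a$ is a unit-regular element. When $x,y$ are invertible, $\varphi(x)$ and $\varphi(y)$ are invertible (their spectra avoid $0$ by the single-variable relation, and $u^3=1$ makes $u$ invertible), so the candidate element $a$ will be invertible, hence trivially unit-regular. Theorem \ref{T} then upgrades the spectral equality to an honest identity $\varphi(xy)=u^2\varphi(x)\varphi(y)$. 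Establishing $u^3=1$ as an element (not merely spectrally) likely follows by taking $x=y=z=1$ in the final multiplicativity-type relation or by combining centrality with $\sigma(u^3)=\{1\}$ and the resulting scalar identification.

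I expect the main obstacle to be the bookkeeping that turns the three-variable spectral hypothesis into a clean two-element spectral identity of the form $\sigma(a x)=\sigma(b x)$ to which Theorem \ref{T} applies: one must verify that varying a single factor over all of $A$ (not just over invertibles or over the image) yields the universally quantified $x$ that Theorem \ref{T} requires, and one must carefully track the central factors $u, u^2$ through the substitutions so that the surjectivity of $\varphi$ supplies an arbitrary multiplier. The restriction to invertible $x,y$ in the conclusion is exactly the symptom of this obstacle — it is the range over which the auxiliary element $a$ can be guaranteed unit-regular.
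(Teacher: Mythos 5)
Your overall skeleton --- establish $\sigma(u^3)=\{1\}$ for $u=\varphi(1)$, get $u\in Z(A)$ and $u^3=1$, deduce that $\varphi$ preserves invertibility from $\sigma(u^2\varphi(x))=\sigma(x)$, and finally apply Theorem \ref{T} to $\sigma\bigl(\varphi(x)\varphi(y)\varphi(z)\bigr)=\sigma\bigl(u\varphi(xy)\varphi(z)\bigr)$ with $\varphi(z)$ running over all of $A$ by surjectivity --- is exactly the paper's, and your final multiplicativity step is correct as sketched. However, your route to centrality has a genuine gap. The three one-variable identities you propose to compare, $\sigma(\varphi(x)u^2)=\sigma(x)$, $\sigma(u\varphi(x)u)=\sigma(x)$, and $\sigma(u^2\varphi(x))=\sigma(x)$, carry no commutation information at all: for any $a,b$ in a Banach algebra one has $\sigma(ab)\cup\{0\}=\sigma(ba)\cup\{0\}$, and here, since $u$ is invertible, $u\varphi(x)u=u^{-1}\bigl(u^2\varphi(x)\bigr)u$ and $\varphi(x)u^2=u^{-2}\bigl(u^2\varphi(x)\bigr)u^2$ are similar to $u^2\varphi(x)$, so the three spectra coincide automatically whether or not $u$ commutes with anything. ``Spectral commutation'' is free and implies nothing. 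The paper's fix is to keep \emph{two} free variables so that Theorem \ref{T} can actually be invoked: from $\sigma\bigl(u\varphi(y)\varphi(z)\bigr)=\sigma(1yz)=\sigma(y1z)=\sigma\bigl(\varphi(y)u\varphi(z)\bigr)$ and surjectivity in $z$, one gets $\sigma\bigl(u\varphi(y)w\bigr)=\sigma\bigl(\varphi(y)uw\bigr)$ for all $w\in A$; when $\varphi(y)$ is invertible, $u\varphi(y)$ is invertible, hence unit-regular, and Theorem \ref{T} gives $u\varphi(y)=\varphi(y)u$. By surjectivity $u$ then commutes with every invertible element of $A$, hence with all of $A$, since every element is a linear combination of invertible ones.

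Be careful also with $u^3=1$. Your first suggestion (substitute $x=y=1$ into the final relation) is circular: what Theorem \ref{T} actually delivers is $\varphi(x)\varphi(y)=u\varphi(xy)$, and passing to the stated form $\varphi(xy)=u^2\varphi(x)\varphi(y)$ already uses $u^{-1}=u^2$, i.e.\ $u^3=1$; moreover, with $x=y=1$ the identity $\varphi(x)\varphi(y)=u\varphi(xy)$ reads $u^2=u^2$, which is vacuous. Your second suggestion is the right one, but it requires the centrality you have not yet established, together with semisimplicity: once $u\in Z(A)$, the element $u^3-1$ is central and quasinilpotent, so $r\bigl((u^3-1)x\bigr)\le r(u^3-1)\,r(x)=0$ for all $x\in A$, whence $u^3-1$ lies in the radical, which is zero. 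Note that central elements of a semisimple Banach algebra need not be scalars, so a ``scalar identification'' is not available here.
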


\begin{proof} Set $u=\varphi(1)$.
Taking $x=y=z=1$ we get $ \sigma(u^3) = \{1\}$. In particular, $u$ is invertible. Next we have
$$
\sigma\bigl(u\varphi(y)\varphi(z)\bigr) = \sigma(1yz) = \sigma(y1z) = \sigma\bigl(\varphi(y)u\varphi(z)\bigr) 
$$
for all $x,y\in A_0$. From Theorem \ref{T} it follows that $\varphi(y)u= u\varphi(y)$ whenever $\varphi(y)$ is invertible. That is, $u$ commutes with all invertible elements in $A$, and is therefore contained in $Z(A)$.  Hence $u^3$ also belongs to $Z(A)$, and so $ \sigma(u^3) = \{1\}$ implies that $u^3=1$. 

From $\sigma(u^2\varphi(y)) = \sigma(y)$ we see that $\varphi(y)$ is invertible whenever $y$ is invertible. Take invertible $x,y\in A_0$. Applying Theorem \ref{T} to
$$
\sigma\bigl(\varphi(x)\varphi(y)\varphi(z)\bigr) = \sigma(xyz)= \sigma\bigl(1(xy)z\bigr)= \sigma\bigl(u\varphi(xy)\varphi(z)\bigr)
$$
we thus get $\varphi(x)\varphi(y) = u\varphi(xy)$.
\end{proof}

Adding the assumption that $\varphi$ is linear we get a definitive conclusion.

\begin{corollary}\label{ce3}
Let $A_0$ and $A$ be Banach algebras with $A$ semisimple. Let $\varphi:A_0\to A$ be a surjective linear map satisfying $\sigma\bigl(\varphi(x)\varphi(y)\varphi(z)\bigr) = \sigma(xyz)$ for all
$x,y,z\in A_0$. Then $\varphi(1)\in Z(A)$, $\varphi(1)^3 =1$, and $\varphi(xy) = \varphi(1)^2 \varphi(x)\varphi(y)$ for all  $x,y\in A_0$.
\end{corollary}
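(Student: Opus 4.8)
The plan is to invoke Corollary \ref{LLL} for the bulk of the statement and then to use linearity only to upgrade the multiplicativity relation from invertible pairs to arbitrary pairs. Since a surjective linear map is in particular surjective, Corollary \ref{LLL} applies without change and immediately gives $\varphi(1)\in Z(A)$, $\varphi(1)^3=1$, together with $\varphi(xy)=\varphi(1)^2\varphi(x)\varphi(y)$ for all \emph{invertible} $x,y\in A_0$. Thus the only thing that remains is to show that this last identity in fact holds for all $x,y\in A_0$, and this is precisely the step where the linearity hypothesis (absent in Corollary \ref{LLL}) is needed.

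Write $u=\varphi(1)$ and consider the map $B:A_0\times A_0\to A$ given by $B(x,y)=\varphi(xy)-u^2\varphi(x)\varphi(y)$. Because $\varphi$ is linear, for fixed $y$ the assignment $x\mapsto\varphi(xy)$ is linear and so is $x\mapsto u^2\varphi(x)\varphi(y)$, and symmetrically in the second variable; hence $B$ is bilinear. By the previous paragraph we already know that $B(x,y)=0$ whenever both $x$ and $y$ are invertible, and the goal becomes to conclude $B\equiv 0$ from this together with bilinearity.

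The key elementary observation is that in a unital Banach algebra every element is a sum of two invertible elements. Indeed, given $x\in A_0$, its spectrum $\sigma(x)$ is compact, so choosing any $\lambda\neq 0$ with $|\lambda|>r(x)$ we have $\lambda\notin\sigma(x)$; then $x-\lambda 1$ is invertible, $\lambda 1$ is invertible, and $x=(x-\lambda 1)+\lambda 1$. Now fix an invertible $y$: the map $x\mapsto B(x,y)$ is linear and vanishes on invertible elements, hence $B(x,y)=B(x-\lambda 1,y)+\lambda B(1,y)=0$ for every $x\in A_0$. This shows $B(x,y)=0$ for all $x\in A_0$ and all invertible $y$. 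Fixing now an arbitrary $x\in A_0$ and repeating the same argument in the second variable yields $B(x,y)=0$ for all $x,y\in A_0$, which is exactly the desired identity $\varphi(xy)=u^2\varphi(x)\varphi(y)$.

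I do not expect a genuine obstacle in this argument: its entire substance is the decomposition of an arbitrary element as a sum of two invertibles, which reduces the extension to a two-step application of bilinearity. The only point demanding a moment's care is that the scalar $\lambda$ be taken nonzero so that $\lambda 1$ is itself invertible, and this is guaranteed by choosing $|\lambda|>r(x)$.
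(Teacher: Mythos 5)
Your proposal is correct and takes essentially the same route as the paper: the paper also invokes the conclusions of Corollary \ref{LLL} and then extends the identity one variable at a time, writing an arbitrary $x$ as $(x-\lambda 1)+\lambda 1$ with $x-\lambda 1$ invertible and expanding by linearity. Your bilinear-map packaging via $B(x,y)=\varphi(xy)-\varphi(1)^2\varphi(x)\varphi(y)$ is only a cosmetic reformulation, with the vanishing of $B(1,y)$ playing exactly the role of the paper's cancellation through $\varphi(1)^3=1$ and $\varphi(1)\in Z(A)$.
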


\begin{proof}
 If $x\in A_0$ is arbitrary, then $x - \lambda 1$ is invertible for some $\lambda\in\mathbb C$, and so $\varphi\bigl((x-\lambda 1)y\bigr) = \varphi(1)^2\varphi(x-\lambda 1)\varphi(y)$ for every invertible $y$. As $\varphi$ is linear this clearly yields $\varphi(xy) = \varphi(1)^2 \varphi(x)\varphi(y)$. A similar argument shows that the same is true if $y$ is not invertible.
\end{proof}

In the  $C^*$-algebra case  we do not need to assume the linearity, which brings us closer to Molnar's results \cite{Mol}.

\begin{corollary}\label{ce3c*}
Let $A_0$  be a Banach algebra and $A$ be a $C^*$-algebra. Let $\varphi:A_0\to A$ be a surjective  map satisfying $\sigma\bigl(\varphi(x)\varphi(y)\varphi(z)\bigr) = \sigma(xyz)$ for all
$x,y,z\in A_0$. Then $\varphi(1)\in Z(A)$, $\varphi(1)^3 =1$, and $\varphi(xy) = \varphi(1)^2 \varphi(x)\varphi(y)$ for all  $x,y\in A_0$.
\end{corollary}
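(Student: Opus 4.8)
The plan is to reduce everything to Theorem \ref{Sp}, using the $C^*$-structure to dispense with the linearity that drove the proof of Corollary \ref{ce3}. Since a $C^*$-algebra is semisimple, Corollary \ref{LLL} applies verbatim and already yields $u := \varphi(1)\in Z(A)$, $u^3=1$, together with $\varphi(xy)=u^2\varphi(x)\varphi(y)$ for all \emph{invertible} $x,y\in A_0$. Thus the first two assertions come for free, and only the extension of the multiplicativity relation from invertible elements to arbitrary ones remains. In Corollary \ref{ce3} this extension was obtained by writing $x-\lambda 1$ as an invertible element and invoking linearity; here I will instead produce the relation directly for all $x,y$ by an application of Theorem \ref{Sp}.

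First I would record the special case $x=1$ of the hypothesis: since $\sigma(\varphi(1)\varphi(y)\varphi(z))=\sigma(yz)$, we have
$$\sigma\bigl(u\varphi(y)\varphi(z)\bigr)=\sigma(yz)\quad\text{for all } y,z\in A_0,$$
with no invertibility assumption. Now fix $x,y\in A_0$ and set $a=\varphi(x)\varphi(y)$ and $b=u\varphi(xy)$. I claim that $\sigma(aw)=\sigma(bw)$ for every $w\in A$. Indeed, surjectivity of $\varphi$ lets me write $w=\varphi(z)$ for some $z\in A_0$; then $\sigma(aw)=\sigma(\varphi(x)\varphi(y)\varphi(z))=\sigma(xyz)$ by the hypothesis, while $\sigma(bw)=\sigma(u\varphi(xy)\varphi(z))=\sigma((xy)z)=\sigma(xyz)$ by the displayed identity applied with $xy$ in place of $y$. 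Hence $\sigma(aw)=\sigma(bw)$ for all $w$, and Theorem \ref{Sp} forces $a=b$, i.e. $\varphi(x)\varphi(y)=u\varphi(xy)$. Multiplying by $u^2$ and using $u^3=1$ together with the centrality of $u$ gives $\varphi(xy)=u^2\varphi(x)\varphi(y)$, as desired.

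The argument is short because the real work sits in the tools already assembled, so the only point requiring care is the passage through surjectivity: since $\varphi$ is not assumed injective, a given $w$ may equal $\varphi(z)$ for many different $z$, so I must ensure that I only use that \emph{some} such $z$ exists and that \emph{both} spectra evaluate to the same $\sigma(xyz)$ for that choice. This is exactly what the two spectral identities guarantee, so no compatibility issue arises. The conceptual heart, and what lets the $C^*$ hypothesis do the job previously played by linearity, is that Theorem \ref{Sp} establishes the implication $\sigma(aw)=\sigma(bw)\ \forall w\Rightarrow a=b$ unconditionally for $C^*$-algebras, whereas in a general semisimple algebra it is available (via Theorem \ref{T}) only under a regularity hypothesis on $a$.
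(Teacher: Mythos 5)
Your proposal is correct and follows essentially the same route as the paper: the paper's proof is precisely ``run the argument of Corollary \ref{LLL} (which gives $\varphi(1)\in Z(A)$ and $\varphi(1)^3=1$ since $C^*$-algebras are semisimple), then for arbitrary $x,y$ apply Theorem \ref{Sp} instead of Theorem \ref{T} to $\sigma\bigl(\varphi(x)\varphi(y)\varphi(z)\bigr)=\sigma\bigl(u\varphi(xy)\varphi(z)\bigr)$,'' which is exactly your argument. The only cosmetic difference is that you invoke Corollary \ref{LLL} as a black box rather than repeating its proof, and your explicit use of surjectivity to let $w=\varphi(z)$ range over all of $A$ is the same (correctly handled) step the paper leaves implicit.
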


\begin{proof} 
The same argument as in the proof of Corollary \ref{LLL} works, except that at the end we  may take arbitrary $x$ and $y$ and  then apply Theorem \ref{Sp} instead of Theorem \ref{T}.
\end{proof}

Our conclusion can be read as that the map $x\mapsto \varphi(1)^2 \varphi(x)$ is multiplicative.  We remark that multiplicative maps on rings often turn out to be automatically additive \cite{Mart2, Ric1}; say, this is true in prime rings having nontrivial idempotents. Accordingly, by adding some assumptions to Corollary \ref{ce3c*} one can get a more complete result.  See also \cite{Mol}.

\subsection{The condition $r\bigl(\varphi(x)\varphi(y)\varphi(z)\bigr) = r(xyz)$ } Our final result is a corollary to Theorem \ref{Tr}.

\begin{corollary}\label{cec*}
Let $A_0$  be a Banach algebra and $A$ be a prime $C^*$-algebra. Let $\varphi:A_0\to A$ be a surjective  map satisfying $r\bigl(\varphi(x)\varphi(y)\varphi(z)\bigr) = r(xyz)$ for all
$x,y,z\in A_0$. Then for each pair $x,y\in A_0$ there exists $\lambda(x,y)\in \mathbb C$ such that $|\lambda(x,y)|=1$ and  $\varphi(xy) = \lambda(x,y) \varphi(x)\varphi(y)$.
\end{corollary}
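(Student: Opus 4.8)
The plan is to follow the template of the proof of Corollary \ref{LLL}, replacing every appeal to Theorem \ref{T} by an appeal to Corollary \ref{TrC}, and to exploit the surjectivity of $\varphi$ to turn the three-variable identity into statements of the form $r(aw)=r(bw)$ for all $w\in A$, which is exactly the hypothesis of Corollary \ref{TrC}. Write $u=\varphi(1)$. Taking $x=y=z=1$ gives $r(u^3)=r(1)=1$, so in particular $u\neq 0$. The overall idea is first to pin down $u$ as a unimodular scalar multiple of $1$, and then to use the triple $(1,xy,z)$ to compare $\varphi(x)\varphi(y)$ with $u\varphi(xy)$.

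The crux, and the step where the spectral-radius setting is genuinely harder than the spectrum setting, is showing $u\in\mathbb{C}1$. Using the triples $(1,y,z)$ and $(y,1,z)$ I get $r(u\varphi(y)\varphi(z))=r(yz)=r(\varphi(y)u\varphi(z))$ for all $y,z\in A_0$. Fixing $y$ and letting $\varphi(z)$ range over all of $A$ (surjectivity), Corollary \ref{TrC} applied to $u\varphi(y)$ and $\varphi(y)u$ yields $u\varphi(y)=\lambda(y)\varphi(y)u$ for some $\lambda(y)$ with $|\lambda(y)|=1$. In the spectrum case (Corollary \ref{LLL}) the analogous step produced an honest equality $u\varphi(y)=\varphi(y)u$ and hence centrality at once; here one only obtains commutation up to a scalar, and the obstacle is to remove that scalar. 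I would resolve this with Lemma \ref{Lld}: by surjectivity the relation says that the left and right multiplication operators $L_u,R_u\colon A\to A$, $L_u\xi=u\xi$, $R_u\xi=\xi u$, satisfy $L_u\xi\in\mathbb{C}R_u\xi$ for every $\xi\in A$, so $L_u=\nu R_u$ for a single scalar $\nu$; evaluating at $\xi=1$ forces $\nu=1$ since $u\neq 0$, whence $u\in Z(A)$. As $A$ is prime its center is $\mathbb{C}1$, so $u=\gamma 1$, and $r(u^3)=1$ then gives $|\gamma|=1$.

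With $u=\gamma 1$ in hand the conclusion is immediate. The triple $(1,xy,z)$ gives $r(u\varphi(xy)\varphi(z))=r(xyz)=r(\varphi(x)\varphi(y)\varphi(z))$ for all $z$; letting $\varphi(z)$ range over $A$ and applying Corollary \ref{TrC} to $\varphi(x)\varphi(y)$ and $u\varphi(xy)=\gamma\varphi(xy)$ yields $\varphi(x)\varphi(y)=\mu\gamma\,\varphi(xy)$ with $|\mu|=1$. Setting $\lambda(x,y)=(\mu\gamma)^{-1}$, which has modulus $1$, gives $\varphi(xy)=\lambda(x,y)\varphi(x)\varphi(y)$, as desired. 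I expect the only real difficulty to be the scalar-removal in the middle paragraph; everything else is a direct transcription of the spectrum-case argument with $r$ in place of $\sigma$.
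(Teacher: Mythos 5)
Your proof is correct, and it follows the paper's overall template (pin down $u=\varphi(1)$ as a unimodular scalar, then compare $\varphi(x)\varphi(y)$ with $u\varphi(xy)$ via the triple $(1,xy,z)$ and Corollary \ref{TrC}), but you handle the crucial scalar-removal step by a genuinely different mechanism. The paper, faced with $ux=\mu_x xu$ for all $x\in A$ with $|\mu_x|=1$, replaces $x$ by $x+1$ to get $(\mu_{x+1}-\mu_x)xu=(1-\mu_{x+1})u$, deduces that in either resulting case $uxu=xu^2$ holds for all $x$, and then invokes Lemma \ref{Lec} (Martindale's lemma) to conclude $u\in\mathbb{C}1$; this route never explicitly needs the triviality of the center. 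You instead read the relation $u\xi\in\mathbb{C}\,\xi u$ (for all $\xi\in A$, by surjectivity) as local linear dependence of the multiplication operators $L_u$ and $R_u$, apply Lemma \ref{Lld} to get $L_u=\nu R_u$, force $\nu=1$ by evaluating at $\xi=1$ (using $u\ne 0$, which follows from $r(u^3)=1$), and then use the fact -- stated in Section 3 of the paper -- that prime $C^*$-algebras have trivial center. Both arguments rely only on tools already established in the paper; yours is arguably more direct and mirrors the way Lemma \ref{Lld} is used in the proofs of Theorems \ref{T1} and \ref{Tr}, while the paper's substitution trick is more elementary in that it stays inside the algebra and feeds directly into Lemma \ref{Lec} without passing through centrality. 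One point worth noting explicitly in your write-up (harmless here): when you apply Corollary \ref{TrC} to the pair $u\varphi(y)$, $\varphi(y)u$, the degenerate case $\varphi(y)u=0$ forces $u\varphi(y)=0$ by semisimplicity, so the conclusion $a=\lambda b$ still holds; your argument is unaffected.
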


\begin{proof}
We argue similarly as in the proof of Corollary \ref{LLL}.  Set $u=\varphi(1)$. For all $y,z\in A_0$ we have
$$
r\bigl(u\varphi(y)\varphi(z)\bigr) = r(1yz) = r(y1z) = r\bigl(\varphi(y)u\varphi(z)\bigr) 
$$
Corollary \ref{TrC} tells us that $u\varphi(y)$ and $\varphi(y)u$ are equal up to a scalar factor of modulus 1. Thus, for every $x\in A$ there exists $\mu_x\in \mathbb C$ such that $|\mu_x|=1$ and
$ux = \mu_x xu$. Hence
$$
\mu_{x+1} (x+1)u = u(x+1) = ux + u = \mu_{x} xu + u 
$$
for every $x\in A$. That is,
$$
(\mu_{x+1} - \mu_x)xu = (1-\mu_{x+1})u.
$$
Therefore either $xu\in \mathbb C u$ or $\mu_{x+1} = 1$, i.e., $ux = xu$. In each of the two cases we have $uxu= xu^2$. Lemma \ref{Lec} implies that $u\in \mathbb C$ (and so we can actually take $\mu_x=1$ for every $x\in A$).  From $r(u^3)=1$ we see that $|u|=1$. Finally, we have
$$
r\bigl(\varphi(x)\varphi(y)\varphi(z)\bigr) = r(xyz)= r\bigl(1(xy)z)\bigr)= r\bigl(u\varphi(xy)\varphi(z)\bigr)
$$
and so the desired conclusion follows from Corollary \ref{TrC}.
\end{proof}

\begin{remark}
The scalars $\lambda(x,y)$ are not entirely arbitrary. From $\varphi\bigl((xy)z\bigr) = \varphi\bigl(x(yz)\bigr)$ one immediately infers that 
\begin{equation}\label{ha}
\lambda(xy,z)\lambda(x,y) = \lambda(x,yz)\lambda(y,z),
\end{equation}
unless $\varphi(x)\varphi(y)\varphi(z)=0$. In Group Theory,  maps satisfying \eqref{ha} are called 2-cocycles. Their homology classes form
the second cohomology group. Since any further discussion in this
direction would lead us too far from the scope of this paper, let us just
say that standard results from Homological Algebra indicate that finding a more detailed description of $\lambda(x,y)$ may be a very difficult task. \end{remark}

{\bf Acknowledgements}. We are thankful to Lajos Molnar for drawing our attention to \cite{LT}, and to Primo\v z Moravec for providing us with relevant information from Homological Algebra.

\end{document}